\newtheorem{theorem}{Theorem}
\newtheorem{lemma}{Lemma}
\newtheorem{remark}{Remark}
\newtheorem{proposition}{Proposition}
\newtheorem{example}{Example}
\newtheorem{thmx}{Theorem}
\title[]{On uniqueness of functions in the extended Selberg class with moving targets}
\author[J. Wang]{Jun Wang}
\address[Jun Wang]{School of Mathematics Sciences\\ Fudan University\\
Shanghai 200433\\ China}
\email{majwang@fudan.edu.cn}
\author[Q.Y.Wang]{Qiongyan Wang}
\address[Qiongyan Wang]{School of Mathematical Sciences\\ Peking University\\ Beijing, 100871\\ China}
\email{qiongyanwang@pku.edu.cn}
\author[X. Yao]{Xiao Yao}
\address[Xiao Yao]{School of Mathematical Sciences and LPMC\\ Nankai University\\ Tianjin, 300071\\ China}
\email{yaoxiao@nankai.edu.cn}
\subjclass[2010]{Primary 11M36, 30D20.}
\keywords{$L$-function, Selberg class, Uniqueness, Moving target, Order.}
\thanks{The first author was supported by National Natural Science Foundation of China (Grant No.11771090). The third author was supported  by National Key R\&D Program of China (2020YFA0713300) and NSFC of China(No.11901311).}
\begin{document}
\maketitle

\begin{abstract}
We study the question of when two functions $L_1,L_2$ in the extended Selberg class are identical in terms of the zeros of $L_i-h(i=1,2)$.
Here, the meromorphic function $h$ is called moving target. With the assumption on the growth order of $h$, we prove that $L_1\equiv L_2$ if $L_1-h$
and $L_2-h$ have the same zeros counting multiplicities. Moreover, we also construct some examples to show that the assumption is necessary. Compared with the known methods in the literature of this area, we developed a new strategy which is based on the transcendental directions first proposed in the study of distribution of Julia set in complex dynamical system. This may be of independent interest.
\end{abstract}

\section{Introduction and main results}

This paper concerns the uniqueness problem related to moving
targets for $L$-functions, which are Dirichlet series with the Riemann zeta function as the prototype. The Selberg class $\mathcal{S}$ of $L$-functions consists of all Dirichlet series $L(s)=\sum_{n=1}^{\infty}{a(n)\over n^s}$ of a complex variable $s$ with $a(1)=1$,
satisfying the following axioms (see \cite{Se,St}):\medskip

(i) Ramanujan hypothesis.  $a(n)\ll n^{\varepsilon}$ for any $\varepsilon>0$.\vskip 1mm

(ii) Analytic continuation.  There exists a non-negative integer $k$ such that $(s-1)^{k}\emph{L}(s)$ is an entire function of finite order.\vskip 1mm

(iii) Functional equation.   $\emph{L}(s)$ satisfies a functional equation of type
$$\Lambda_\emph{L}(s)=\omega\overline{\Lambda_\emph{L}(1-\overline{s})},$$\\
where
$\Lambda_\emph{L}(s)=\emph{L}(s)Q^{s}\prod\limits_{j=1}^{K}\Gamma(\lambda_{j}s+\mu_{j})$
with the constants $Q,\lambda_j,\mu_j,\omega$ having $Q>0,\lambda_{j}>0, \rm{Re}\,\mu_{j}\geq0$ and $|\omega|=1$.\vskip 1mm

(iv) Euler product.  $\emph{L}(s)$ has a product representation $\emph{L}(s)=\prod\limits_{p}\emph{L}_{p}(s)$,\\
where the product is over all primes $p$, and
$$\emph{L}_{p}(s)= \exp\left(\sum\limits_{j=1}^{\infty}\frac{b(p^{j})}{p^{js}}\right)$$
with coefficients $b(p^{j})$ satisfying $b(p^{j})\ll p^{j\theta}$ for some $\theta<\frac{1}{2}$.\vskip 2mm
\par The extended Selberg class, denoted by $\mathcal{S}^{\sharp}$, is the set of all Dirichlet series $\emph{L}(s)$ satisfying only the axioms (i)-(iii) (see \cite{St}).
The degree $d_L$ of $L(s)$ is defined to be $$d_L=2\sum\limits_{j=1}^{K}\lambda_{j},$$ where $K$ and $\lambda_{j}$ are the numbers in the axiom (iii). The poles of $\Gamma(\lambda_{j}s+\mu_{j})$ occur at points $s=-\frac{m+\mu_j}{\lambda_j}(m=0,1,2,\cdots)$, which are also zeros of $L(s)$ with the possible exception of $s=0$. All these zeros have non-positive real part, and may have multiplicity greater than one. They are so-called ``trivial" zeros of $L(s)$, see \cite{GHK}. Throughout this paper, what we call an $\emph{L}$-function is a Dirichlet series which belongs to the extended Selberg class $\mathcal{S}^{\sharp}$. In particular, we do not assume the Euler product axiom. The results obtained in this paper particularly apply to the Selberg class $\mathcal{S}$.\vskip 2mm

As an extension of the Riemann zeta function, $\emph{L}$-functions are the important objects in number theory, algebraic geometry etc. Value distribution and uniqueness of $\emph{L}$-functions have been studied extensively
(see e.g. \cite{CY,GHK,Ki,KL,Kn,Li1,Li2,St}). Generally, Nevanlinna  \cite{Hayman} proved that any non-constant meromorphic function can be uniquely determined by five values, which is called Five-value Theorem. It means that meormorphic functions $f$ and $g$ are identical when $f,g$ take the same five values at the same points. While for $\emph{L}$-functions, the number of the values can be reduced to just one when counting multiplicities (see \cite{St,HL}), which is stated in the following theorem.\vskip 2mm

\begin{thmx}\label{St-HL}
Two L-functions $L_1$ and $L_2$ are equal if
 $L_1-a$ and $L_2-a$ have the same zeros counting multiplicities for a complex number $a(\not=1)$.
\end{thmx}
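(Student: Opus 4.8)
The plan is to form the quotient $F = (L_1 - a)/(L_2 - a)$ and to prove that $F \equiv 1$. First I would record the two structural features of any $L \in \mathcal{S}^\sharp$ on which the argument rests. By axiom (ii), the only possible pole of $L$ is at $s = 1$, and $(s-1)^{k}L(s)$ is entire of finite order; hence $L$, and therefore $L - a$ and the quotient $F$, is a meromorphic function of finite order. By the Ramanujan bound (i) together with $a(1) = 1$, the Dirichlet series yields $L_i(\sigma) \to 1$ as $\sigma \to +\infty$ along the real axis, since $|L_i(\sigma) - 1| \le \sum_{n \ge 2} |a(n)|\, n^{-\sigma} \to 0$.

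Next I would locate the zeros and poles of $F$. Because $L_1 - a$ and $L_2 - a$ have exactly the same zeros counting multiplicities, these cancel in the quotient, so $F$ is holomorphic and non-vanishing on $\mathbb{C} \setminus \{1\}$; the only point at which $F$ can carry a zero or pole is $s = 1$, where $L_1$ and $L_2$ have their poles. Consequently $F(s) = (s-1)^{m} e^{P(s)}$ for some integer $m$ and some entire function $P$, by the Hadamard factorization of the finite-order, everywhere-nonzero function $F(s)(s-1)^{-m}$; finite order forces $P$ to be a polynomial of degree at most the order of $F$. This reduces the theorem to showing $m = 0$ and $e^{P} \equiv 1$.

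The concluding step is an asymptotic comparison along the positive real axis. Since $a \neq 1$ and $L_i(\sigma) \to 1$, we have $L_i(\sigma) - a \to 1 - a \neq 0$, whence $F(\sigma) \to 1$ as $\sigma \to +\infty$. Writing $F(\sigma) = (\sigma - 1)^{m} e^{P(\sigma)}$ and taking moduli gives $m\log(\sigma-1) + \operatorname{Re} P(\sigma) \to 0$; as a nonconstant real polynomial $\operatorname{Re} P(\sigma)$ tends to $\pm\infty$ and dominates $\log(\sigma-1)$, this forces $m = 0$ and $\operatorname{Re} P \equiv 0$. Then $P(\sigma) = i\,\operatorname{Im} P(\sigma)$ on the real axis, and $e^{i\,\operatorname{Im} P(\sigma)} \to 1$ forces the real polynomial $\operatorname{Im} P$ to be a constant lying in $2\pi\mathbb{Z}$. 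Hence $e^{P} \equiv 1$ and $F \equiv 1$, i.e. $L_1 - a \equiv L_2 - a$, so $L_1 \equiv L_2$.

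The main obstacle—indeed essentially the only delicate input—is the finiteness of the order of $L$, which is what legitimizes replacing the a priori arbitrary entire factor $e^{P}$ by one with polynomial exponent; without it one could not exclude a wild entire $P$ that happens to tend to $0$ along the positive real axis while $F$ misbehaves in other directions. Everything else is bookkeeping: confining the pole to $s = 1$, checking that the shared zeros cancel in $F$, and running the single-variable growth estimate.
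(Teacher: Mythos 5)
Your proposal is correct. Note, however, that the paper never proves Theorem A itself: it is quoted from Steuding \cite{St} and Hu--Li \cite{HL}, so there is no in-paper proof to compare against line by line. What can be compared is the template: your reduction --- form $F=(L_1-a)/(L_2-a)$, observe that the shared zeros cancel and the only possible zero/pole is at $s=1$, and apply Hadamard factorization of a finite-order, zero-free function to get $F(s)=(s-1)^m e^{P(s)}$ with $P$ a polynomial --- is exactly the opening move of all the paper's own proofs (equations \eqref{10-26-5} and \eqref{10-28-3}). Where you differ is in how the argument is closed. Because the target is a constant $a\neq 1$, you get the limit $F(\sigma)\to 1$ along the positive real axis for free, and your elementary two-step computation (real parts of the coefficients of $P$ vanish since a nonconstant real polynomial dominates $\log(\sigma-1)$; then $e^{i\,\mathrm{Im}\,P(\sigma)}\to 1$ forces $\mathrm{Im}\,P$ constant in $2\pi\mathbb{Z}$) finishes the proof without any further machinery. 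This is essentially Hu--Li's ``simple proof,'' and it is also slightly cleaner than the paper's proof of the rational-target Theorem \ref{thm-rational}, which at the analogous point invokes the fact that $(s-1)^m e^{as+b}$ with $a\neq 0$ has only the asymptotic values $0,\infty$; your real-axis computation avoids appealing to asymptotic-value theory and works for $P$ of arbitrary degree, so only finiteness of the order is needed rather than order exactly one. The heavier tools in the paper (the second main theorem, transcendental directions, Stirling estimates, Rouch\'e's theorem on trivial zeros) are needed precisely because for a genuine moving target $h$ the limit of $F$ in the right half-plane may fail to exist or may equal $1$; for a constant $a\neq 1$ none of that is necessary, and your proof correctly exploits this.
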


Steuding's original statement has no restriction on $a$, see \cite[Theorem 7.11]{St}. Lately, an example given by Hu and Li \cite{HL}, that is $\emph{L}_1(s)=1+\frac{2}{4^s}$ and $\emph{L}_2(s)=1+\frac{3}{9^s}$, shows that $a\not=1$ cannot be dropped in Theorem \ref{St-HL}. When $a$ is replaced by a rational function $R(s)$, Cardwell and Ye \cite{CY} proved the uniqueness conclusion under the assumption that
$$\lim_{s\rightarrow \infty}R(s)\neq 1.$$
Hu and Li strengthened their result by improving $R(s)$ to a meromorphic function $h(s)$, see \cite[Theorem 3]{HL}.
\begin{thmx}\label{HL}
Let $h$ be a meromorphic function of finite order such that
\begin{equation}\label{eq-condition}
\lim_{\rm{Re}\,s\rightarrow +\infty}h(s)=a\not=1
\end{equation}
($a$ may be $\infty$). Two L-functions $L_1$ and $L_2$ are equal if $L_1-h$ and $L_2-h$ have the same zeros counting multiplicities.
\end{thmx}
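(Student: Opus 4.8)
The plan is to reduce everything to the elementary but decisive fact that every function in $\mathcal{S}^{\sharp}$ tends to $1$ as $\mathrm{Re}\,s\to+\infty$. By the Ramanujan hypothesis (i), the Dirichlet series $L_i(s)=1+\sum_{n\ge 2}a_i(n)n^{-s}$ converges absolutely for $\mathrm{Re}\,s>1$, and the tail $\sum_{n\ge 2}|a_i(n)|n^{-\sigma}$ tends to $0$ as $\sigma=\mathrm{Re}\,s\to+\infty$; hence $L_i(s)\to 1$ uniformly in the half-plane $\mathrm{Re}\,s>\sigma_0$ as $\sigma_0\to\infty$. Combined with the hypothesis \eqref{eq-condition} that $h(s)\to a\neq 1$, this is what will normalize the quotient studied below, and it is also why the excluded value is precisely $a=1$.

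The main device is the auxiliary function $F=(L_1-h)/(L_2-h)$. First I would check that $F$ is a well-defined meromorphic function which is not identically $0$: were $L_2\equiv h$, we would get $a=\lim_{\mathrm{Re}\,s\to+\infty}L_2=1$, contradicting $a\neq 1$, and similarly $L_1\not\equiv h$. The key structural claim is that the only possible zero or pole of $F$ lies at $s=1$. By axiom (ii) each $L_i$ is meromorphic with its sole pole at $s=1$, so away from $s=1$ the only poles of $L_i-h$ are the poles of $h$; at such a pole $s_0$ both $L_1-h$ and $L_2-h$ acquire a pole of exactly the order of $h$ there (the finite values $L_i(s_0)$ cannot cancel it), so these contributions cancel in $F$. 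At every other point, the hypothesis that $L_1-h$ and $L_2-h$ share the same zeros counting multiplicity forces $\mathrm{ord}_{s_0}(L_1-h)=\mathrm{ord}_{s_0}(L_2-h)$, whence $\mathrm{ord}_{s_0}F=0$. Therefore $F(s)=(s-1)^N e^{g(s)}$ for some integer $N$ and entire $g$. Since $h$ has finite order and each $(s-1)^{k_i}L_i$ is entire of finite order, $F$ is a quotient of finite-order meromorphic functions, hence of finite order; by the Hadamard factorization this forces $g$ to be a polynomial.

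It then remains to determine $N$ and $g$ from the asymptotics of $F$. Using $L_i\to 1$ together with \eqref{eq-condition}, I would show $F(s)\to 1$ as $\mathrm{Re}\,s\to+\infty$: when $a$ is finite this is immediate from $L_i-h\to 1-a\neq 0$, and when $a=\infty$ one divides numerator and denominator by $h$ (which is zero-free in a far right half-plane) and uses $L_i/h\to 0$ to again get the limit $1$. Evaluating $F(\sigma)=(\sigma-1)^N e^{g(\sigma)}$ along the positive real axis, the condition $|F(\sigma)|\to 1$ forces $\mathrm{Re}\,g$ to be bounded, hence constant, and then $N=0$; the condition $\arg F(\sigma)\to 0$ next forces $\mathrm{Im}\,g$ to be constant as well. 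Thus $g$ is constant with $e^{g}=1$ and $N=0$, so $F\equiv 1$, that is $L_1\equiv L_2$.

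I expect the asymptotic step of the last paragraph to be the principal obstacle — not because it is deep, but because it must be executed carefully in the two regimes $a$ finite and $a=\infty$, and because both conclusions $N=0$ and the constancy of $g$ have to be extracted from the single scalar limit $F\to 1$ (the modulus pins down $N$ and $\mathrm{Re}\,g$, while the argument pins down $\mathrm{Im}\,g$). I would stress that the finite-order assumption on $h$ is essential precisely here: it is what turns the zero-free factor $e^{g}$ into an exponential of a polynomial, without which the growth argument collapses.
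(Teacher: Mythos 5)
Your argument is correct. One caveat about the comparison: the statement in question is Theorem B, which this paper quotes from Hu--Li \cite{HL} and does not prove; your proof is essentially the known ``simple proof'' of \cite{HL}, and it is also the same scheme the paper itself deploys for Theorem \ref{thm-rational} and for the $\rho(h)<1$ case of Theorem \ref{thm-entire}: cancel the poles of $h$ in the quotient $\varphi=(L_1-h)/(L_2-h)$, use finite order and Hadamard to write $\varphi=(s-1)^N e^{g}$ with $g$ a polynomial, then force $\varphi\to 1$ somewhere in the right half-plane and exploit the rigidity of $(s-1)^N e^{g}$ (the paper's genuinely new tool, transcendental directions, is only needed in Theorem \ref{thm-entire} where no limit hypothesis on $h$ is available). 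The single step you should tighten is the last one: from $F(\sigma)\to 1$ along the positive real axis you get $\mathrm{Im}\,g(\sigma)\to 0$ only modulo $2\pi$; to conclude that $\mathrm{Im}\,g$ is constant, note that a non-constant real polynomial is continuous and unbounded, hence by the intermediate value theorem it attains values at distance $\pi$ from $2\pi\mathbb{Z}$ at arbitrarily large $\sigma$, contradicting $e^{i\,\mathrm{Im}\,g(\sigma)}\to 1$. With that remark inserted, and with the observation that constancy of $\mathrm{Re}\,g$ and $\mathrm{Im}\,g$ on a ray forces the polynomial $g$ to be constant, your proof is complete.
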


Here, the term `order' comes from value distribution theory. The standard notation and well-known theorems in the theory can be found in \cite{Gol,Hayman,YY}. Let $f$ be a meromorphic function. Define the proximity function $m(r,f)$ and the integrated counting function of poles $N(r,f)$ by
$$m(r,f):=\frac{1}{2\pi}\int_{0}^{2\pi}\log^+|f(re^{i\theta})|d\theta,$$
$$N(r,f):=\int_0^r\frac{n(t,f)-n(0,f)}{t}dt+n(0,f)\log r,$$
where $\log^+ x=\max\{\log x,0\}$ for $x>0$, and $n(t,f)$ denotes the number of poles in $|z|\leq t$ counting multiplicity.
The sum $T(r, f):=m(r,f)+N(r, f)$ is the Nevanlinna characteristic, then the order $\rho(f)$ and the lower order $\mu(f)$ are defined by
$$\rho(f):=\limsup\limits_{r\to\infty}\frac{\log^{+} T(r,f)}{\log r},\ \ \ \mu(f):=\liminf\limits_{r\to\infty}\frac{\log^{+} T(r,f)}{\log r}.$$

When $h$ is non-constant, the condition (1) in Theorem \ref{HL} is necessary to be kept due to the example in which $h$ can be an arbitrary $L$-function. Moreover, Hu and Li also constructed an example to show that Theorem \ref{HL} need not hold if $h$ is of infinite order, see \cite{HL}. Here, $h$ can be seen as a moving target. Usually, a moving target of a function is assumed to grow more slowly than the function, then it is called a small function. It is interesting that there is no such growth restriction for $h$ in \cite{HL}, and $h$ may grow more quickly than $L_1$ and $L_2$. \vskip 2mm

Moving targets have appeared in Nevanlinna's version of the second main theorem (SMT) with three small functions. Chuang \cite{Chuang} obtained the general version of SMT with $q(\geq 3)$ small functions for entire functions in 1964.  The meromorphic case was solved by Osgood and Steinmetz independently\cite{Osgood,Stei} in the 1980's. It has been open for quite a long time to establish the small function version of SMT truncated to the level 1.\vskip 2mm

Without this tool, Nevanlinna's strategy in proving Five-value Theorem can not work out the small function version of Five-value Theorem, which was solved by Li and Qiao \cite{LiY} with a different approach. Finally, the SMT with small functions truncated to the level $1$ was obtained in a remarkable paper of Yamanoi \cite{Yamanoi}.  According to the Vojta's dictionary, it seems that the moving target problem has some close and subtle relation with number theory. This is our initial motivation to study the uniqueness problem of $L$-functions with moving targets.\vskip 2mm

The condition (1) and `finite order' on $h$ seem to be two different ways to describe $h$, and both of them can not be deleted completely in Theorem \ref{HL}. The question naturally arises as to whether these two requirements could be weakened in some sense. In Hu and Li's example for the condition (1), $h$ is an arbitrary $L$-function, and it is known that $\emph{L}$-functions are of order one. This motivates us to pay attention only to the order of the moving target $h$. \vskip 2mm

 In this paper, we base our uniqueness results of $\emph{L}$-functions on the above Theorem \ref{St-HL}, Theorem \ref{HL} and their remarks, and try to find a new single condition on $h$ in terms of its order. Since $\emph{L}$-functions behave almost like entire functions except one possible pole at $s=1$, we firstly consider the non-constant entire $h$, and have the following theorem.
\begin{theorem}\label{thm-entire}
Let $h$ be a non-constant entire function with $\rho(h)\neq 1$. Two L-functions $L_1$ and $L_2$ are equal if $L_1-h$ and $L_2-h$ have the same zeros counting multiplicities.
\end{theorem}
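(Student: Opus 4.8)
The plan is to compare $L_1$ and $L_2$ through the quotient $F:=(L_1-h)/(L_2-h)$ and to exploit, in place of the limit condition \eqref{eq-condition} of Theorem \ref{HL}, the transcendental directions of $h$ inside the right half-plane. First, a reduction: if $h$ is a (nonconstant) polynomial then $\lim_{\mathrm{Re}\,s\to+\infty}h(s)=\infty\neq 1$, so Theorem \ref{HL} applies verbatim; hence we may assume $h$ is transcendental (with $\rho(h)\neq1$). Since $L_1-h$ and $L_2-h$ share all their zeros counting multiplicities, these zeros cancel in $F$, while the only possible poles of $L_i-h$ sit at $s=1$ (the sole pole of an element of $\mathcal{S}^{\sharp}$, as $h$ is entire). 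Thus $F$ is holomorphic and zero-free off $s=1$, and $F\not\equiv\infty$ because $L_2\equiv h$ would force $\rho(h)=1$; consequently $F(s)=(s-1)^n e^{g(s)}$ for some $n\in\mathbb{Z}$ and entire $g$. Because $\rho(F)\le\max\bigl(\rho(L_1-h),\rho(L_2-h)\bigr)=\max(1,\rho(h))<\infty$, the function $g$ is a polynomial; write $d=\deg g$ with leading coefficient $c_d$.

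Next I would record the identity obtained from $L_1-L_2=(L_2-h)(F-1)$, namely
\[ F(s)=1+\frac{L_1(s)-L_2(s)}{L_2(s)-h(s)}. \]
The merit of this form is that the numerator $L_1-L_2=\sum_{n\ge2}\bigl(a_1(n)-a_2(n)\bigr)n^{-s}$ is bounded and in fact tends to $0$ as $\mathrm{Re}\,s\to+\infty$ (the terms with $n=1$ cancel since $a_1(1)=a_2(1)=1$, and the series converges by axiom (i)), while $L_2(s)\to1$ there. Hence along any ray $\arg s=\theta$ with $|\theta|<\pi/2$ on which $|h(s)|\to\infty$ one has $|L_2-h|\to\infty$, so the fraction tends to $0$ and $F(re^{i\theta})\to1$ along the relevant sequence $r=r_k\to\infty$. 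This is exactly where a transcendental direction of $h$ lying strictly inside the right half-plane enters: it supplies such a ray, along which $\mathrm{Re}\,s=r\cos\theta\to+\infty$.

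I would then extract the conclusion from $F\to1$. Taking logarithms, $\log|F(r_ke^{i\theta})|=n\log|r_ke^{i\theta}-1|+\mathrm{Re}\,g(r_ke^{i\theta})\to0$. If $d\ge1$ then $\mathrm{Re}\,g(re^{i\theta})=|c_d|\cos(d\theta+\arg c_d)\,r^{d}+O(r^{d-1})$ dominates the $O(\log r)$ term, forcing $\log|F|\to\pm\infty$ unless $\cos(d\theta+\arg c_d)=0$; since the latter holds for only finitely many $\theta$, it suffices to choose a transcendental direction $\theta\in(-\pi/2,\pi/2)$ avoiding this finite set, and this forces $d=0$. With $g$ constant the same limit gives $n\log r_k\to-\mathrm{Re}\,g$, whence $n=0$, and then $F\equiv e^{g}$ with $e^{g}=\lim F=1$. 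Therefore $F\equiv1$ and $L_1\equiv L_2$.

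The main obstacle is the geometric input used above: one must guarantee that $h$ actually possesses transcendental directions $\theta$ bounded away from $\pm\pi/2$ (so that $\mathrm{Re}\,s\to+\infty$ along the ray) and avoiding the finitely many angles with $\cos(d\theta+\arg c_d)=0$. A single unbounded sequence in the right half-plane does not suffice, since its arguments could accumulate at $\pm\pi/2$, and then $L_1-L_2$ need not tend to $0$. What is needed is that the set of transcendental directions of a transcendental entire function of finite order is genuinely \emph{abundant} — for instance of positive measure, and in particular meeting some compact subinterval of $(-\pi/2,\pi/2)$ in more than finitely many directions. Establishing this distribution statement, and making precise the sense in which $|h|\to\infty$ along and near such a direction, is the heart of the argument, and is where the theory of transcendental directions arising in the study of Julia sets is brought to bear.
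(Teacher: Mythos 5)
Your argument, as you yourself note, hinges on an unproved ``distribution statement'': that a transcendental entire function of finite order always has transcendental directions inside $(-\pi/2,\pi/2)$, so that $\mathrm{Re}\,s\to+\infty$ along them. This is a genuine gap, and it cannot be filled in the generality you need: the statement is \emph{false} once $\rho(h)>1$. For example, $h(s)=e^{s^2}\operatorname{erfc}(s)$ (equivalently the Mittag--Leffler function $E_{1/2}(-s)$) is entire of order $2$ and satisfies $h(s)\sim \frac{1}{s\sqrt{\pi}}\to 0$ uniformly in the sector $|\arg s|\le \tfrac{3\pi}{4}-\varepsilon$; it has no transcendental direction in the right half-plane whatsoever, so your mechanism for producing $F\to 1$ never engages. (Positive measure of $TD(h)$ alone would not suffice either, since the whole set could sit in the left half-plane, as it does for $e^{-s}$.) A second, independent failure occurs at $\rho(h)=\infty$, which the theorem allows: there your reduction ``$\rho(F)<\infty$, hence $g$ is a polynomial'' breaks down, and with $g$ transcendental the growth comparison along a single ray gives nothing. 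So your strategy only covers part of the theorem.

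What you have is essentially the paper's argument for the case $\rho(h)<1$: there $\mu(h)\le\rho(h)<1$ and $N(r,h)=0$, so the Wang--Yao bound (Lemma \ref{wang-yao}) gives $\mathrm{meas}(TD(h))\ge \pi/\mu(h)>\pi$, which \emph{forces} transcendental directions outside $[\pi/2,3\pi/2]$ — this is exactly the abundance you were missing, and note it requires measure $>\pi$, not merely positive measure. For $\rho(h)>1$ (including $\infty$) the paper abandons directions entirely and argues via Nevanlinna theory: writing $\varphi=(s-1)^m e^{\phi}$ with $\phi$ entire, the Second Main Theorem applied to $\varphi$ with the values $0,1,\infty$ gives
\begin{equation*}
T(r,\varphi)\le N\Big(r,\frac1\varphi\Big)+N(r,\varphi)+N\Big(r,\frac{1}{\varphi-1}\Big)+o(T(r,\varphi))
\le T(r,L_1)+2T(r,L_2)+O(\log r)+o(T(r,\varphi)),
\end{equation*}
since $N(r,1/\varphi)+N(r,\varphi)=|m|\log r$ and the $1$-points of $\varphi$ are among the zeros of $L_2-L_1$; combined with the First Main Theorem estimate $T(r,h)\le T(r,\varphi)+2T(r,L_2)+T(r,L_1)+O(1)$ and $T(r,L_j)=\frac{d_{L_j}}{\pi}r\log r+O(r)$, this yields $T(r,h)=O(r\log r)$, contradicting $\rho(h)>1$. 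This half of the proof is where the real work lies, and it is the part your proposal is missing; to complete your write-up you would need to import this (or some equally global) argument rather than strengthen the directional lemma, since no directional lemma of the kind you ask for can be true.
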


Now in Theorem \ref{thm-entire}, $h$ can be of infinite order. The condition $\rho(h)\neq 1$ here is best possible in the sense by taking $$\emph{L}_1(s)=1+\frac{2}{4^s},\,\,\emph{L}_2(s)=1+\frac{3}{9^s},\,\,h(s)=1+\frac{2}{4^s}+\frac{3}{9^s}.$$ It is clear that
$\emph{L}_1-h$ and $\emph{L}_2-h$ do not have any zeros, and $\rho(h)=1$.\vskip 2mm

\par The proof of Theorem 1 essentially used transcendental directions, which was introduced by two of the authors when investigating
the distribution of Julia set in the transcendental iteration theory \cite{Wang-Yao}.  To the best of our knowledge, it seems the first time that the transcendental
direction has been applied in the uniqueness study of $L$-functions.\vskip 2mm

When $h$ is rational, we obtain a slight generalization of Cardwell and Ye's result \cite{CY} by removing the assumption on the limit of the rational moving target $h$ as $s\to\infty$.
\begin{theorem}\label{thm-rational}
Let $h$ be a rational function such that $h\not\equiv 1$. Two L-functions $L_1$ and $L_2$ are equal if $L_1-h$ and $L_2-h$ have the same zeros counting multiplicities.
\end{theorem}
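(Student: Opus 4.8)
The plan is to split according to the behaviour of $h$ at infinity. Since $h$ is rational it has order zero and a well-defined limit $a:=\lim_{s\to\infty}h(s)\in\mathbb{C}\cup\{\infty\}$, the same along every direction. If $a\neq 1$, then $h$ is a finite-order meromorphic function satisfying \eqref{eq-condition} (read along $\mathrm{Re}\,s\to+\infty$), so Theorem \ref{HL} applies verbatim and gives $L_1\equiv L_2$. Hence the only genuinely new case is $a=1$; from now on I assume $\lim_{s\to\infty}h(s)=1$ and set $r:=h-1$, a nonzero rational function with $r\to 0$ at infinity.

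The core of the argument is to show that the quotient
$$\phi:=\frac{L_1-h}{L_2-h}$$
is rational. First I would observe that, because $L_1-h$ and $L_2-h$ share all their zeros with multiplicity, these cancel in $\phi$; the only possible zeros or poles of $\phi$ therefore lie in the finite set $\{1\}\cup\{\text{poles of }h\}$ (and at a pole of $h$ one in fact has $\phi\to 1$). Consequently $\phi=R\,e^{g}$ with $R$ rational and $g$ entire. Since every L-function has order at most one while $h$ has order zero, $\phi$ has order at most one, which forces $g$ to be a polynomial of degree at most one, say $g(s)=\alpha s+\beta$.

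Next I would remove the exponential factor by testing along the positive real axis. As $\sigma=\mathrm{Re}\,s\to+\infty$ the Dirichlet series gives $L_i(\sigma)-1=O(2^{-\sigma})$, which decays exponentially, whereas $r(\sigma)$ decays only polynomially; hence $L_i-h\sim -r$ and $\phi(\sigma)\to 1$. A finite nonzero limit of $R(\sigma)e^{\alpha\sigma+\beta}$ is impossible unless $\alpha=0$, so $\phi$ is rational with $\phi(\infty)=1$. Writing $\phi=1+\psi$ with $\psi$ rational and $\psi\to 0$, the identity $L_1-h=\phi(L_2-h)$ rearranges to $L_1-L_2=\psi\,(L_2-h)$.

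Finally I would compare decay rates in this last identity. Note $L_2-h\not\equiv 0$: otherwise $L_2$ would be a rational L-function, hence of order zero, hence the constant $1$, forcing $h\equiv 1$. If $\psi\not\equiv 0$, then the right-hand side is a nonzero function decaying polynomially as $\sigma\to+\infty$, while the left-hand side $L_1-L_2$ either vanishes identically or decays exponentially; since exponential decay outruns polynomial decay, this is a contradiction. Therefore $\psi\equiv 0$, i.e.\ $\phi\equiv 1$ and $L_1\equiv L_2$. I expect the main obstacle to be the middle step, namely reducing the a priori transcendental quotient $\phi$ to a rational function (the order bound together with the removal of the linear exponential factor); after that the exponential-versus-polynomial dichotomy closes the argument. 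One must also keep track of the finitely many exceptional points $s=1$ and the poles of $h$, but these contribute only to the rational factor $R$ and do not affect the conclusion.
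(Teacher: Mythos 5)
Your proposal is correct and follows essentially the same route as the paper: reduce to the case $\lim_{s\to\infty}h(s)=1$ via the earlier result for rational targets, factor $\varphi=(L_1-h)/(L_2-h)$ by Hadamard as a rational function times $e^{\alpha s+\beta}$, and eliminate the exponential by comparing the exponential decay of $L_i-1$ against the polynomial decay of $h-1$ as $\mathrm{Re}\,s\to+\infty$. The only cosmetic differences are that the paper works in a sector and concludes $m=b=0$ directly from $(s-1)^m e^{b}\to 1$, whereas you finish with a second application of the same exponential-versus-polynomial dichotomy to the identity $L_1-L_2=\psi\,(L_2-h)$.
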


The next part is for transcendental meromorphic $h$ with at least one pole. We obtain two uniqueness theorems for $\emph{L}$-functions satisfying the same functional equation or with positive degree, if the order of $h$ takes finite value outside $\mathbb{N}$. Here, $\mathbb{N}$ denotes the set of all positive integers.
\begin{theorem}\label{thm-meromorphic}
Let $h$ be a transcendental meromorphic function with at least one pole and $\rho(h)\notin \mathbb{N}\cup\{\infty\}$.
For two L-functions $L_1$ and $L_2$ satisfying the same functional equation, if $L_1-h$ and $L_2-h$ have the same zeros counting multiplicities, then $L_1\equiv L_2$.
\end{theorem}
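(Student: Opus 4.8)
The plan is to argue by contradiction: assume $L_1\not\equiv L_2$ and force $\rho(h)\in\mathbb N\cup\{\infty\}$. Set
\[
\phi:=\frac{L_1-h}{L_2-h},
\]
a well-defined non-zero meromorphic function (neither $L_i\equiv h$, since $h$ has a pole off $s=1$ while the $L_i$ do not). Because $L_1-h$ and $L_2-h$ share all zeros counting multiplicity, and at every pole of $h$ away from $s=1$ both have the same principal part $-h$, the quotient $\phi$ has neither zeros nor poles except possibly at $s=1$; hence $\phi(s)=(s-1)^m e^{g(s)}$ for some $m\in\mathbb Z$ and entire $g$, with $\rho(\phi)=\rho(\phi-1)=\rho(e^{g})$. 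From the definition one reads off $L_1-L_2=(\phi-1)(L_2-h)$ and $\phi-1=(L_1-L_2)/(L_2-h)$. Here $L_1-L_2=\sum_{k\ge n_0}c_k k^{-s}$ with $c_{n_0}\ne0$, $n_0\ge2$, is a non-zero Dirichlet series, hence of order $1$, with no zeros for $\mathrm{Re}\,s$ large (it is dominated there by $c_{n_0}n_0^{-s}$) and order-$1$ growth as $\mathrm{Re}\,s\to-\infty$; the latter is where the \emph{common} functional equation enters, since $L_1-L_2$ then inherits it and its left half-plane growth is governed by the Gamma factors.

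The first reduction uses only $\rho(h)<\infty$. A function $e^{g}$ with $g$ transcendental has infinite order (by Borel--Carath\'eodory, $\log M(r,e^{g})=\max_{|z|=r}\mathrm{Re}\,g$ outgrows every power of $r$), so $\rho(e^{g})\le\max(1,\rho(h))<\infty$ forces $g$ to be a polynomial, say of degree $n$, and $\rho(\phi)=n\in\{0\}\cup\mathbb N$. If $n\ge2$, then $\rho(\phi-1)=n>1=\rho(L_1-L_2)$, whence $\rho(L_2-h)=\rho\bigl((L_1-L_2)/(\phi-1)\bigr)=n$ and therefore $\rho(h)=n\in\mathbb N$, contradicting the hypothesis. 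If $n=0$ then $\phi=c(s-1)^{m}$; the existence of a pole of $h$ forces $\phi-1$ to have a zero, ruling out $\phi\equiv c\ (\ne1)$, and in the remaining subcases $\phi\to 0$ or $\infty$ along the negative real axis, so $h\sim L_1$ or $L_2$ there and $\rho(h)\ge1$, which combined with the bound $\rho(h)\le1$ again contradicts $\rho(h)\ne1$.

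Thus everything comes down to the critical case $n=1$, i.e. $\phi=(s-1)^m e^{\alpha s+\beta}$ with $\alpha\ne0$; here the order identities give only $\rho(h)\le1$, so, using $\rho(h)\ne1$, it remains to derive $\rho(h)\ge1$. The zeros of $\phi-1$ cluster along the line $\ell=\{\mathrm{Re}(\alpha s)=0\}$, with roughly $|\alpha|r/\pi$ of them in $|s|\le r$; I would split according to the direction of $\ell$, which is exactly the transcendental-direction dichotomy for $e^{\alpha s}$. If $\alpha\notin\mathbb R$ then $\ell$ is not vertical, so a positive proportion of these zeros escape to $\mathrm{Re}\,s\to+\infty$; since $L_1-L_2\ne0$ there, each is an uncancelled pole of $h$, giving $N(r,h)\gtrsim r$ and hence $\rho(h)\ge1$. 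If $\alpha\in\mathbb R$ then $\ell$ is the imaginary axis, all poles of $h$ stay in a bounded vertical strip, and on a far left half-plane $\phi\to0$ or $\infty$; there $h\sim L_1$ or $L_2$, which grows with order $1$ as $\mathrm{Re}\,s\to-\infty$ by the functional equation, so again $\rho(h)\ge1$. Either way $\rho(h)=1$, a contradiction, so $\phi\equiv1$ and $L_1\equiv L_2$.

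I expect the $\alpha\in\mathbb R$ subcase of $n=1$ to be the main obstacle. There the zeros of $\phi-1$ lie in the same vertical strip that carries the nontrivial zeros of $L_1-L_2$, so the pole-counting that works when $\alpha\notin\mathbb R$ can fail completely: the poles of $h$ may in principle all be cancelled by zeros of $L_1-L_2$. Overcoming this means abandoning the zero count and instead extracting genuine order-$1$ growth of $h$ directly from the common functional equation in a left half-plane, which is precisely where the assumptions ``same functional equation'' and ``at least one pole'' do their real work. Keeping the logarithmic drift of $\ell$, and the interaction of $\phi-1$ with the pole at $s=1$, under control is the delicate bookkeeping I would need to carry out carefully.
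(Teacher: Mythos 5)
Your case analysis ($n\ge2$; $n=0$; $n=1$ with $\alpha\notin\mathbb{R}$ or $\alpha\in\mathbb{R}$) is a sensible reorganization of the problem, and two pieces are genuinely fine: the order arithmetic for $n\ge2$ is essentially the paper's treatment of its case $\rho(h)>1$, and the pole count for $\alpha\notin\mathbb{R}$ (zeros of $\phi-1$ escaping into the half-plane where $L_1-L_2\neq0$ must be uncancelled poles of $h$) is a correct elementary argument that does not appear in the paper. But the proposal is not complete, and the difficulty is not quite where you locate it. The subcase you flag as the main obstacle, $n=1$ with $\alpha\in\mathbb{R}$, is left as an admitted sketch; in fact your instinct there is exactly right and is what the paper does: it bounds $|h(s)|\le\exp\{r^{\rho(h)+\varepsilon}\}$ off an exceptional set of finite measure (Lemma 3, due to Chen), bounds $|L_j|$ from below in the left sectors via Stirling (Lemma 2), deduces that $\varphi\to1$ along every direction in $(\tfrac{\pi}{2}+\delta,\pi-\delta)\cup(\pi+\delta,\tfrac{3}{2}\pi-\delta)$, and observes that $(s-1)^m e^{as+b}$ with $a\neq0$ admits at most two $1$-value limiting directions. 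Your worry about poles of $h$ being cancelled by zeros of $L_1-L_2$ is a red herring, since the growth argument never counts poles. (Also, your claim that the poles of $h$ lie in a bounded vertical strip is false when $m\neq0$: the zero curve of $\phi-1$ drifts logarithmically; this is harmless for the growth route.)

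The real gap is this: every step where you invoke ``order-$1$ growth as $\mathrm{Re}\,s\to-\infty$ governed by the Gamma factors'' (the $n=0$, $m\neq0$ subcase and the $n=1$, $\alpha\in\mathbb{R}$ subcase) silently assumes $d_{L_1}=d_{L_2}>0$. The common functional equation may have degree zero, in which case there are no Gamma factors at all and the equation only gives $|L_j(s)|\asymp Q^{1-2\mathrm{Re}\,s}$; if $Q\le1$ this is no growth whatsoever, and your lower bound $\rho(h)\ge1$ evaporates, so the contradiction never arrives. Ruling this out, i.e.\ proving that a non-constant degree-zero function in $\mathcal{S}^{\sharp}$ has $Q>1$, is precisely Proposition 1 of the paper, whose proof uses the transcendental-direction lemma (Lemma 1); it is not a formality one can wave at. As it stands your argument proves the theorem only for $L$-functions of positive degree; to close it you must either prove $Q>1$ in the degree-zero case or otherwise handle $L_j$ that stay bounded in the left half-plane. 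A further minor nit: in the $n=0$ case, ``a pole of $h$ forces $\phi-1$ to have a zero'' uses a pole $p\neq1$, so the possibility that the only pole of $h$ sits at $s=1$ needs a separate word.
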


For $L$-functions not possessing the same functional equation, once they have positive degree, the following theorem holds correspondingly.
\begin{theorem}\label{thm-meromorphic-positive-degree}
Let $h$ be a transcendental meromorphic function with at least one pole and $\rho(h)\notin \mathbb{N}\cup\{\infty\}$.
For two $L$-functions $L_1$ and $L_2$ with positive degree, if $L_1-h$ and $L_2-h$ have the same zeros counting multiplicities,
then $L_1\equiv L_2$.
\end{theorem}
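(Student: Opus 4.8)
The plan is to introduce the auxiliary meromorphic function
$$\Phi:=\frac{L_1-h}{L_2-h},$$
where we may assume $L_1\not\equiv h$ and $L_2\not\equiv h$, since otherwise the shared-zero hypothesis already forces $L_1\equiv L_2$. The first step is to locate the zeros and poles of $\Phi$. Because $L_1-h$ and $L_2-h$ have the same zeros counting multiplicities, all zeros cancel; and at every pole $s_0\neq 1$ of $h$ both $L_1-h$ and $L_2-h$ have a pole of the same order (that of $h$, as the $L_i$ are holomorphic there), so these cancel as well. Hence $\Phi$ has zeros and poles only possibly at $s=1$, so $\Phi(s)=(s-1)^{m}e^{g(s)}$ for some $m\in\mathbb{Z}$ and some entire $g$. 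Since $L_1\equiv L_2$ is equivalent to $\Phi\equiv 1$, I assume $\Phi\not\equiv 1$ and aim for a contradiction. Solving the defining relation gives the two identities
$$h=\frac{L_1-\Phi L_2}{1-\Phi},\qquad L_1=h+\Phi\,(L_2-h).\quad(\dagger)$$

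Next I would determine the nature of $g$. Taking logarithmic derivatives in the definition of $\Phi$ yields
$$\frac{(L_1-h)'}{L_1-h}-\frac{(L_2-h)'}{L_2-h}=\frac{m}{s-1}+g'(s).$$
Since $\rho(h)<\infty$ by hypothesis and each $L_i$ has order one (a positive-degree $L$-function satisfies $T(r,L_i)\asymp r\log r$), the functions $L_i-h$ have polynomially bounded characteristic, so the lemma on the logarithmic derivative gives $m(r,g')=O(\log r)$; as $g'$ is entire this forces $T(r,g')=O(\log r)$, whence $g'$, and therefore $g$, is a polynomial. Set $n:=\deg g$, so $\rho(\Phi)=\max(n,0)$. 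Now I compare orders: from $\Phi=(L_1-h)/(L_2-h)$ one gets $T(r,\Phi)\le 2T(r,h)+O(r\log r)$, and from $(\dagger)$ one gets $T(r,h)\le 2T(r,\Phi)+O(r\log r)$. Together with $\rho(L_i)=1$ these give $\rho(h)\le\max(\rho(\Phi),1)$ and, whenever $n\ge 2$, also $\rho(h)\ge n$; hence $\rho(h)=n\in\mathbb{N}$, contradicting $\rho(h)\notin\mathbb{N}\cup\{\infty\}$. This disposes of every case except the borderline range $n\in\{0,1\}$, i.e. $\rho(\Phi)\le 1$.

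The range $\rho(\Phi)\le 1$ is the crux, and it is exactly here that positive degree is indispensable. The characteristic inequalities now give only $\rho(h)\le 1$, so I must rule out $\rho(h)<1$. The essential point is that a positive-degree $L$-function is of order one and \emph{infinite type}: via the functional equation and Stirling's formula, in the left half-plane $L_i(s)$ is asymptotic to its gamma-factor $\Delta_i(s)$, and $|\Delta_i(s)|=\exp\big((c_i+o(1))|s|\log|s|\big)$ with $c_i>0$. I would then show that the numerator $L_1-\Phi L_2$ in $(\dagger)$ grows like $\exp(c|s|\log|s|)$ along rays into the left half-plane. When $d_{L_1}\neq d_{L_2}$ the dominant gamma-factor cannot be cancelled; when $d_{L_1}=d_{L_2}$ the apparent cancellation of the leading growth is controlled, because the residual quantities $\Delta_1/\Delta_2$ and the Dirichlet tails $\overline{(L_i-1)(1-\bar s)}$ are of order at most one, hence of growth only $\exp(O(|s|))$, too small to remove the factor $|s|\log|s|$ from the exponent. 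This yields $T(r,L_1-\Phi L_2)\gtrsim r\log r$. On the other hand, if $\rho(h)<1$ then $(\dagger)$ and $\rho(\Phi)\le 1$ give $T(r,L_1-\Phi L_2)=T\big(r,h(1-\Phi)\big)=O(r)$, a contradiction. Therefore $\rho(h)=1\in\mathbb{N}$, again contradicting the hypothesis, and the theorem follows.

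I expect the main obstacle to be precisely this last step: certifying the lower bound $\exp(c|s|\log|s|)$ for $L_1-\Phi L_2$ in the left half-plane when $L_1$ and $L_2$ have equal degree but possibly different functional equations, so that no cancellation of the leading growth occurs. I would carry out this ray-by-ray growth comparison in the spirit of the transcendental-direction method used in the proof of Theorem \ref{thm-entire} and in \cite{Wang-Yao}, which furnishes directions along which the characteristic of a function of positive order concentrates and is therefore well adapted to establishing such lower bounds.
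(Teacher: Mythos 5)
Your reduction steps are sound and run parallel to the paper's: the factorization $\Phi=(s-1)^m e^{g}$, the fact that $g$ is a polynomial (the paper gets this from the second main theorem plus Hadamard factorization rather than the logarithmic-derivative lemma, but either works), the elimination of $\deg g\ge 2$ by order comparison, and the reduction to the case $\rho(h)<1$ with $\Phi=(s-1)^m e^{as+b}$. The genuine gap is exactly where you predicted it, and your proposed justification for the crucial lower bound $T(r,L_1-\Phi L_2)\gtrsim r\log r$ is not valid reasoning. In the left half-plane write $L_j=\chi_j\,\overline{L_j(1-\overline s)}$, so that $L_1-\Phi L_2=\chi_1\bigl[(1+\delta_1)-G\,(1+\delta_2)\bigr]$ with $G=\Phi\chi_2/\chi_1$ and $\delta_j=\overline{L_j(1-\overline s)}-1$. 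Knowing that $G$ and $\delta_j$ have growth $\exp(O(r))$ (your ``order at most one'' observation) says nothing about whether the bracket can decay like $\exp(-cr\log r)$: a difference of two functions each of modulus $\exp(O(r))$ can be arbitrarily small, and if it were that small it would exactly cancel the $\exp(c\,r\log r)$ growth of $\chi_1$. What is needed is an anti-cancellation \emph{lower} bound, and upper bounds on the residual factors cannot supply it.

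Moreover, the cancellation you must exclude really does occur at the level of leading growth. Take $L_1(s)=\zeta(s)$, the Riemann zeta-function, and $L_2$ a Dirichlet $L$-function attached to an even primitive character mod $q$: both have gamma factor $\Gamma(s/2)$, while $Q_1=\pi^{-1/2}$ and $Q_2=(q/\pi)^{1/2}$, so choosing $a=\log q$ and a suitable $b$ makes $G=\Phi\chi_2/\chi_1$ identically $1$; the bracket then collapses to $\overline{(L_1-L_2)(1-\overline s)}$, which is exponentially small, and the $r\log r$ growth of $L_1-\Phi L_2$ survives only because of the floor $|\overline{(L_1-L_2)(1-\overline s)}|\ge c\,n_0^{{\rm Re}\,s}$ coming from the first Dirichlet coefficient at which $L_1$ and $L_2$ differ. (This configuration has $\rho(h)=1$, so it does not contradict the theorem, but it shows the leading growths of the two terms can cancel identically, refuting your stated mechanism.) A complete proof along your lines would need this coefficient-floor argument together with a treatment of all nearly-degenerate cases in which $G$ tends to a constant at various speeds, which requires the full Stirling expansion of $\chi_2/\chi_1$ and minimum-modulus type input. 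It is instructive that the paper does not close this case by growth estimates at all: after using Lemma \ref{lemma-estimate} to force $a\in\mathbb{R}$ (the analogue of your non-degenerate case), it re-invokes the same-zeros hypothesis a \emph{second} time, locating ``trivial'' zeros of $L_1,L_2$ and applying Rouch\'e's theorem with the bound of Lemma \ref{meromorphic-estimate} on $h$, to produce either a zero of $L_1-h$ not shared by $L_2-h$ or a sequence of common zeros along which $\Phi=1$, whence a contradiction via $1$-value limiting directions as in the proof of Theorem \ref{thm-entire}. Your proposal uses the hypothesis only once, through the form of $\Phi$, so the entire burden falls on the unproven growth claim; as it stands the proof is incomplete.
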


The condition $\rho(h)\notin \mathbb{N}\cup\{\infty\}$ in above two theorems can not be dropped, which is shown by the following two examples.
\begin{example}
For any two distinct $L$-functions $L_1,L_2$ and a positive integer $m$, we set
$$h_m(s)=\frac{\exp(s^m)L_2(s)-L_1(s)}{\exp(s^m)-1},\,\,\text{then}\quad \frac{L_1-h_m}{L_2-h_m}=\exp(s^m).$$
It is clear that $L_1-h_m$ and $L_2-h_m$ have the same zeros counting multiplicities. When $m\geq 2$, we know
$\rho(h_m)=m$ since $\rho(L_1)=\rho(L_2)=1$. Similarly, set $$h_{\infty}(s)=\frac{\exp(e^s)
L_2(s)-L_1(s)}{\exp(e^s)-1},$$
then $L_1-h_\infty$ and $L_2-h_\infty$ also have the same zeros counting multiplicities, and $\rho(h_{\infty})=\infty$.
\end{example}
\begin{example}
It can be seen in \cite{KL} that the two distinct $L$-functions
\begin{equation}
L_1(s)=1+\frac{2}{4^s},\quad L_2(s)=1+\frac{1}{2^s}+\frac{2}{4^s}
\end{equation}
satisfy the same functional equation
$$
2^{s}L(s)=2^{1-s}\overline{L(1-\overline{s})}.
$$
Clearly, $L_1-h_1$ and $L_2-h_1$ have the same zeros counting multiplicities, where $h_1$ is defined in Example 1, and $d_{L_1}=d_{L_2}=0$.
Rewrite $h_1$ as
$$h_1(s)=1+\frac{2}{4^s}+\frac{2^{-s}}{e^s-1}.$$
It is easy to see $N(r,h_1)=N(r,\frac{1}{e^s-1})+\log r$, so $\rho(h_1)=1$.
\end{example}

\section{Preliminaries}
For convenience of readers, we begin the section with two fundamental theorems in value distribution theory, see \cite{Gol,Hayman,YY}.
The Second Main Theorem stated here is only a simple version since it is already enough for us. \vskip 2mm
\par\noindent {\bf First Main Theorem.}\, Let $f$ be a meromorphic function and let $c$ be any complex number. Then
$$T(r,\frac{1}{f-c})=T(r,f)+O(1),$$
where the error term depends on $f$ and $c$.\vskip 2mm
\par\noindent {\bf Second Main Theorem.}\, Let $f$ be a meromorphic function. For any distinct $a_1,a_2,\cdots,a_q\in \mathbb{C}\cup\{\infty\}$ and an integer $q\geq 3$, we have
$$(q-2)T(r,f)\leq \sum_{j=1}^{q}N(r,\frac{1}{f-a_j})+S(r,f),$$
where $S(r,f)$ is the small error term,
$$S(r,f)=O(\log rT(r,f)),\quad r\to\infty,\,\,r\not\in E$$
and the set $E$ has finite length.\vskip 2mm
\par Given a meromorphic function $f$, for any $a\in \mathbb{C}\cup\{\infty\}$, a value $\theta\in [0, 2\pi)$ is said to be a $a$-value limiting direction of $f$ if there exists an unbounded sequence of $\{s_n\}_{n=1}^{\infty}$ such that
$$
\lim_{n\to\infty}\arg s_n=\theta\quad \text{and}\quad \lim_{n\rightarrow\infty}f(s_n)=a.
$$
When $a=\infty$, if further
$$
\lim_{n\rightarrow\infty}\frac{\log |f(s_n)|}{\log |s_n|}=+\infty,
$$
we call such $\theta$ a transcendental direction of $f$.
\begin{example}
For $f(s)=(s-1)^me^{as+b}$ with some integer $m$ and constants $a\not=0,b\in\mathbb{C}$, it is clear that $\theta$ with positive ${\rm Re}(ae^{i\theta})$ is a transcendental direction of $f$ while $\theta$ with negative ${\rm Re}(ae^{i\theta})$ is a $0$-value limiting direction of $f$.
\end{example}
The set of all transcendental directions is denoted by
$TD(f)$, and the following lemma gave the lower bound to the Lebesgue measure of $TD(f)$.

\begin{lemma}\cite[Theorem 5]{Wang-Yao}\label{wang-yao}  Let $f$ be a transcendental meromorphic function. If $\mu(f)<\infty$ and
 $$0<\delta(\infty,f):=1-\limsup\limits_{r\to\infty}\frac{N(r,f)}{T(r,f)},$$  then
\[meas(TD(f))
\ge\min\Big\{2\pi,\frac{4}{\mu(f)}\arcsin
\sqrt{\frac{\delta(\infty,f)}{2}}\Big\}.\]
\end{lemma}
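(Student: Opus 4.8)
The plan is to derive this measure estimate from Baernstein's spread relation, followed by a measure-theoretic upgrading step that turns an asymptotic statement about characteristic-sized sublevel sets into a lower bound for the fixed set $TD(f)$. The first ingredient I would record is that for a \emph{transcendental} meromorphic $f$ one always has $T(r,f)/\log r\to\infty$ as $r\to\infty$. This is the engine that converts ``largeness relative to $T(r,f)$'' into the super-polynomial growth built into the definition of a transcendental direction.

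Next, for $\epsilon>0$ I would introduce the angular sets
$$E_r(\epsilon):=\Big\{\theta\in[0,2\pi):\ \log^+|f(re^{i\theta})|>\epsilon\,T(r,f)\Big\},$$
and consider the spread of the deficient value $\infty$,
$$\sigma(\infty):=\lim_{\epsilon\to 0^+}\liminf_{r\to\infty}meas\big(E_r(\epsilon)\big).$$
Since $\mu(f)<\infty$ and, by hypothesis, $\delta(\infty,f)>0$, Baernstein's spread relation applies and yields
$$\sigma(\infty)\ \ge\ \min\Big\{2\pi,\ \tfrac{4}{\mu(f)}\arcsin\sqrt{\tfrac{\delta(\infty,f)}{2}}\Big\},$$
which is precisely the quantity appearing on the right-hand side of the lemma (the degenerate case $\mu(f)=0$ being covered by the cap $2\pi$). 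Thus the whole problem is reduced to showing $meas(TD(f))\ge\sigma(\infty)$.

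To make this reduction rigorous I would argue as follows. Fix $\eta>0$, choose $\epsilon>0$ so small that $\liminf_{r}meas(E_r(\epsilon))\ge\sigma(\infty)-\eta$, and then select a sequence $r_n\to\infty$ with $meas(E_{r_n}(\epsilon))\ge\sigma(\infty)-2\eta$ for every $n$. Applying the reverse Fatou lemma to the indicator functions $\chi_{E_{r_n}(\epsilon)}$ on the finite-measure space $[0,2\pi)$ gives
$$meas\Big(\limsup_{n}E_{r_n}(\epsilon)\Big)\ \ge\ \limsup_{n}meas\big(E_{r_n}(\epsilon)\big)\ \ge\ \sigma(\infty)-2\eta.$$
For any $\theta\in\limsup_n E_{r_n}(\epsilon)$ there is a subsequence $r_{n_k}$ with $\theta\in E_{r_{n_k}}(\epsilon)$, so the points $s_k:=r_{n_k}e^{i\theta}$ satisfy $\arg s_k\equiv\theta$, $|s_k|\to\infty$, and
$$\frac{\log|f(s_k)|}{\log|s_k|}\ >\ \epsilon\,\frac{T(r_{n_k},f)}{\log r_{n_k}}\ \longrightarrow\ +\infty,$$
so $\theta$ is a transcendental direction. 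Hence $meas(TD(f))\ge\sigma(\infty)-2\eta$, and letting $\eta\to0$ finishes the proof.

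The main obstacle is exactly this last passage: the spread relation controls the measures of the \emph{moving} sets $E_r(\epsilon)$ asymptotically, whereas the lemma asks for the measure of a \emph{single fixed} set. The reverse Fatou lemma resolves this, but it must be applied with $\epsilon$ held fixed so that $\limsup_n E_{r_n}(\epsilon)$ is a well-defined subset of $TD(f)$; reconciling this with the fact that $\sigma(\infty)$ is a limit as $\epsilon\to0$ is what forces the $\eta$-approximation. A secondary point to verify carefully is that Baernstein's theorem is invoked under precisely the stated hypotheses $\mu(f)<\infty$ and $\delta(\infty,f)>0$, and that the order-zero case is genuinely subsumed by the $2\pi$ bound.
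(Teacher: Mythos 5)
First, a structural point: the paper does not prove this lemma at all --- it is quoted verbatim as \cite[Theorem 5]{Wang-Yao}, so the only meaningful comparison is with the proof in that cited paper. Your strategy (Baernstein's spread relation, plus the observation that $T(r,f)/\log r\to\infty$ for transcendental $f$, plus a reverse-Fatou argument to pass from moving angular sets to the fixed set $TD(f)$) is essentially the same route taken there; this is the standard way such measure bounds for limiting directions are obtained, and all three ingredients you isolate are the right ones.

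There is, however, one genuine flaw in how you invoke the key theorem. Baernstein's spread relation is \emph{not} a statement about $\liminf_{r\to\infty} meas\big(E_r(\epsilon)\big)$ taken over all radii: Edrei's spread, which is what Baernstein bounded, is defined along a sequence of P\'olya peaks of order $\mu(f)$ (such peaks exist precisely because $\mu(f)<\infty$), and the theorem asserts
\[
\liminf_{n\to\infty} meas\Big\{\theta:\ \log^{+}|f(r_n e^{i\theta})|>\Lambda(r_n)\,T(r_n,f)\Big\}\ \ge\ \min\Big\{2\pi,\ \tfrac{4}{\mu(f)}\arcsin\sqrt{\tfrac{\delta(\infty,f)}{2}}\Big\}
\]
only along such a peak sequence $\{r_n\}$ (and for thresholds $\Lambda(r)\to 0$). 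For functions of irregular growth, with lower order strictly below the order, the quantity you call $\sigma(\infty)$, with $\liminf$ over \emph{all} $r$, can drop below this bound --- this is exactly why the spread is defined via P\'olya peaks in the first place --- so the displayed inequality $\sigma(\infty)\ge\min\{\cdots\}$ is stronger than, and not a consequence of, Baernstein's theorem. Fortunately the damage is local: the rest of your argument never uses more than the existence of \emph{one} sequence $r_n\to\infty$ along which the measures stay above the bound minus $2\eta$, and that is precisely what the peak-sequence form of the spread relation supplies. Replace ``select a sequence $r_n$ by the definition of $\liminf_{r\to\infty}$'' with ``take a P\'olya peak sequence of order $\mu(f)$ and apply the spread relation along it,'' and your reverse-Fatou step and the transcendence step go through unchanged. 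In fact you can streamline the $\epsilon$--$\eta$ bookkeeping by applying the spread relation directly with the single threshold $\Lambda(r)=\big(\log r/T(r,f)\big)^{1/2}$, which tends to $0$ because $f$ is transcendental, while $\Lambda(r)T(r,f)/\log r=\big(T(r,f)/\log r\big)^{1/2}\to\infty$, so every $\theta$ in $\limsup_n$ of the corresponding sets is automatically a transcendental direction. Finally, do make explicit (rather than parenthetical) the convention that for $\mu(f)=0$ the right-hand side is read as $2\pi$ and that the spread relation covers this degenerate case.
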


Transcendental direction was introduced in \cite{Wang-Yao} to study the radial distribution of Julia set for meromorphic functions. We will see that this notion is also useful in discussing the uniqueness of $L$-functions. For example, by Lemma \ref{wang-yao}, we can estimate the positive constant $Q$ in the functional equation satisfied by $L$-functions of degree zero.

\begin{proposition}\label{prop-positive}
Let $L(s)$ be a non-constant $L$-function of degree zero such that
$$
Q^s L(s)=\omega Q^{1-s}\overline{L(1-\overline{s})},
$$
where $|\omega|=1$ and $Q> 0$. Then we have $Q>1$.
\end{proposition}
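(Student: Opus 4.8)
The plan is to exploit the functional equation to extract growth information about $L$ along rays, and then to force $Q>1$ by showing that otherwise $L$ would have too many transcendental directions, contradicting the degree-zero structure.

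First I would write down what degree zero means explicitly. Since $d_L = 2\sum_{j=1}^K \lambda_j = 0$ and each $\lambda_j>0$, the only way this happens is $K=0$, i.e. there are no Gamma factors. Thus the completed function is simply $\Lambda_L(s)=L(s)Q^s$, and the functional equation reduces to
$$
Q^s L(s) = \omega\, Q^{1-s}\,\overline{L(1-\overline{s})}.
$$
Because $L\in\mathcal S^\sharp$ is an ordinary Dirichlet series $L(s)=\sum_{n\ge1} a(n)n^{-s}$ with $a(1)=1$, it is bounded (indeed tends to $1$) as $\mathrm{Re}\,s\to+\infty$. I would use this to analyze the right half-plane versus the left. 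On the far right, $L(s)\to 1$, so $|L(s)|$ is bounded; the functional equation then transfers this into growth of $L$ in the left half-plane, where the factor $Q^{1-2s}$ (after rearranging) governs the size.

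The key step is then to relate the constant $Q$ to transcendental directions via Lemma \ref{wang-yao}. Solving the functional equation for $L$ gives $L(s)=\omega\,Q^{1-2s}\,\overline{L(1-\overline s)}$, and since $\overline{L(1-\overline s)}\to 1$ as $\mathrm{Re}\,s\to -\infty$, the growth of $L$ in the left half-plane is dominated by $|Q^{1-2s}| = Q\cdot Q^{-2\mathrm{Re}\,s}$. If $Q\le 1$, I claim $L$ does not grow fast enough along any direction into the left half-plane to produce a transcendental direction: for $\theta$ with $\cos\theta<0$ one has $-2\mathrm{Re}\,s = -2|s|\cos\theta$, and along such a ray $\log|L(s)| \sim (-2\cos\theta)(\log Q)\,|s| + O(1)$, which is $O(|s|)$, hence $\log|L(s)|/\log|s|\to+\infty$ would require $\log Q>0$, i.e. $Q>1$. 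The plan is to show that if $Q\le1$ then $L$ has \emph{no} transcendental direction at all (in the left half-plane the bound $Q^{-2\mathrm{Re}\,s}\le 1$ keeps $L$ bounded when $Q<1$, or only polynomially controlled when $Q=1$, and in the right half-plane $L\to1$), so that $\mathrm{meas}(TD(L))=0$. On the other hand, a degree-zero Dirichlet series has $\rho(L)$ and $\mu(L)$ finite (in fact it is a finite exponential sum in the variable $Q^{-s}$-type arguments, so $\mu(L)\le 1$), and since $L$ is entire with $N(r,L)=0$ we get $\delta(\infty,L)=1$; Lemma \ref{wang-yao} then forces $\mathrm{meas}(TD(L))>0$, a contradiction. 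Therefore $Q>1$.

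The main obstacle I anticipate is the borderline case $Q=1$, where the exponential factor $Q^{1-2s}$ degenerates to a constant and the crude growth estimate no longer separates the two half-planes. There I would need to rule out transcendental directions more carefully: with $Q=1$ the functional equation reads $L(s)=\omega\,\overline{L(1-\overline s)}$, so $L$ is bounded on \emph{both} half-planes (bounded near $+\infty$ by the Dirichlet series, hence bounded near $-\infty$ by the symmetry), forcing $L$ to be bounded on vertical strips and ultimately showing $L$ cannot have any transcendental direction, again contradicting Lemma \ref{wang-yao} unless $L$ is constant. Verifying that the hypotheses $\mu(L)<\infty$ and $\delta(\infty,L)>0$ genuinely hold for a nonconstant degree-zero $L$-function — and handling the exceptional set $E$ of finite length in the growth estimates along rays — will be the technical crux.
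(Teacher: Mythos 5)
Your proposal is correct and takes essentially the same route as the paper's own proof: assume $Q\le 1$, use the functional equation plus the boundedness of the Dirichlet series in the right half-plane (Ramanujan hypothesis) to bound $|L|$ in the left half-plane, conclude that $TD(L)$ has measure zero, and contradict the lower bound $\mathrm{meas}(TD(L))\ge \pi$ coming from Lemma~\ref{wang-yao} with $\mu(L)\le \rho(L)= 1$ and $\delta(\infty,L)=1$. The only differences are cosmetic: the paper handles $Q<1$ and $Q=1$ in one stroke via $Q^{1-2\mathrm{Re}\,s}\le 1$ for $\mathrm{Re}\,s<-2$, and the technical worries you anticipate (the exceptional set, the borderline case $Q=1$) do not actually arise because all the estimates are pointwise consequences of the functional equation rather than Nevanlinna-type estimates.
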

\begin{proof}
 We first assume that $Q\leq 1$. Noting that
 $$|L(s)|\leq |\omega| Q^{1-2{\rm Re} s}\sum_{n=1}^{\infty} \frac{|a(n)|}{n^{1-{\rm Re} s}},$$
we know that there exists $K_0>0$  such that
$|L(s)|\leq K_0$ for any $s$ with ${\rm Re} s<-2$ by Ramanujan hypothesis. This together with the fact that
$$
L(s)\to 1,\quad \text{as}\quad {\rm Re}\,s\to+\infty
$$
yields that $TD(L)\subseteq(\frac{\pi}{2}, \frac{3\pi}{2})$.
Since $\rho(L)=1$, we have $meas(TD(L))\geq \pi$ from Lemma \ref{wang-yao}, which leads to a contradiction. Therefore, we get $Q>1$.
\end{proof}

\begin{lemma}\label{lemma-estimate}
Let $L(s)$ be a non-constant $L$-function of positive degree $d_L$ in extended Selberg class, and let its functional equation be
$$\Lambda_L(s)=\omega\overline{\Lambda_L(1-{\bar
s})},\quad\text{where}\quad \Lambda_L(s)=L(s)Q^s\prod_{j=1}^K\Gamma(\lambda_j
s+\mu_j)$$ with $Q>0, \lambda_j>0, Re\,\mu_j\geq 0$ and
$|\omega|=1$. Then for any $0<\delta<\pi/4$, we have
\begin{equation*}\label{estimate in left half plane}
\begin{split}
\log |L(re^{i\theta})|=&-\big(d_L\cos \theta\big)r\log r-\big[2\log Q +2\sum_{j}\lambda_j\log\lambda_j-d_L\big]r\cos\theta\\
&+(\theta-\frac{\pi}{2})\big(d_L\sin\theta\big)r+O(\log r)
\end{split}
\end{equation*}
as $r\to\infty$, uniformly for $\theta\in (\frac{\pi}{2}+\delta, \pi-\delta)\cup(\pi+\delta, \frac{3}{2}\pi-\delta)$.
\end{lemma}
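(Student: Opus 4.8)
The plan is to push the estimate into the right half-plane via the functional equation, where $L$ is essentially constant, and thereby reduce everything to the asymptotics of the Gamma factors through Stirling's formula.

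First I would rewrite the functional equation in a usable form. Since $Q>0$ and the $\lambda_j$ are real, one has $\overline{\Lambda_L(1-\bar s)}=\overline{L(1-\bar s)}\,Q^{1-s}\prod_{j=1}^K\Gamma(\lambda_j(1-s)+\overline{\mu_j})$, so $\Lambda_L(s)=\omega\overline{\Lambda_L(1-\bar s)}$ gives
\[
L(s)=\omega\,\overline{L(1-\bar s)}\,Q^{1-2s}\,\frac{\prod_{j=1}^K\Gamma(\lambda_j(1-s)+\overline{\mu_j})}{\prod_{j=1}^K\Gamma(\lambda_j s+\mu_j)}.
\]
Taking $s=re^{i\theta}$ and using $|\omega|=1$, this yields
\[
\log|L(re^{i\theta})|=\log|L(1-\bar s)|+(1-2r\cos\theta)\log Q+\sum_{j=1}^K\log\left|\frac{\Gamma(\lambda_j(1-s)+\overline{\mu_j})}{\Gamma(\lambda_j s+\mu_j)}\right|.
\]
Next I would dispose of the first two terms. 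For $\theta$ in either sub-sector, $\cos\theta\le-\sin\delta<0$, hence ${\rm Re}(1-\bar s)=1-r\cos\theta\to+\infty$ uniformly; since the Dirichlet series converges and $L(\sigma+it)\to1$ as $\sigma\to+\infty$ uniformly in $t$, the first term is $o(1)=O(1)$. The second term equals $-2\log Q\cdot r\cos\theta+O(1)$, which already contributes the $-2\log Q$ part of the coefficient of $r\cos\theta$.

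The heart of the matter is the Gamma sum, where I would apply Stirling's formula $\log\Gamma(z)=(z-\tfrac12)\log z-z+\tfrac12\log2\pi+O(1/|z|)$. The crucial point is its uniform validity: with $z=\lambda_j s+\mu_j$ and $z'=\lambda_j(1-s)+\overline{\mu_j}$ one has $|z|,|z'|\ge\lambda_j r-|\mu_j|\to\infty$, while the principal arguments satisfy $\arg z\to\theta$ (resp. $\theta-2\pi$) and $\arg z'\to\theta-\pi$ on the upper (resp. lower) sub-sector, all lying in a compact subset of $(-\pi,\pi)$ precisely because the $\delta$-neighbourhoods of the imaginary axis and of the negative real axis ($\theta\approx\pi$) are excluded. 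Hence the error is $O(1/r)$ uniformly. Writing $\log z=\log(\lambda_j r)+i\arg z+O(1/r)$ (and similarly for $z'$), passing to $\log|\Gamma|={\rm Re}\log\Gamma$, and inserting ${\rm Re}\,z=\lambda_j r\cos\theta+O(1)$, ${\rm Im}\,z=\lambda_j r\sin\theta+O(1)$ together with the analogous expansions for $z'$, I would expand each $\log|\Gamma(z')/\Gamma(z)|$ and keep only the terms of order $r\log r$ and $r$, pushing the remainder into $O(\log r)$.

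Collecting these contributions and summing over $j$ via $\sum_{j}\lambda_j=d_L/2$ produces the leading term $-d_L\cos\theta\,r\log r$; combining the $Q$-term with the $\sum_j\lambda_j\log\lambda_j$ contribution gives the coefficient $-[2\log Q+2\sum_j\lambda_j\log\lambda_j-d_L]$ of $r\cos\theta$; and the argument terms $-({\rm Im})\cdot\arg$ produce the $r\sin\theta$ term $(\theta-\tfrac{\pi}{2})d_L\sin\theta\,r$ on the upper sub-sector, with the lower sub-sector handled identically (alternatively reduced to the upper one through the reflection $s\mapsto\bar s$, which replaces $L$ by the extended-class $L$-function $\overline{L(\bar s)}$ having the same $d_L$, $Q$ and $\sum_j\lambda_j\log\lambda_j$). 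I expect the main obstacle to be exactly this final bookkeeping: one must track the principal branch of the logarithm separately on the two sub-sectors and confirm that Stirling remains uniform right up to the excluded $\delta$-boundaries, since it is the delicate $r\sin\theta$ coefficient—governed by $\arg z$ and $\arg z'$—that a careless treatment near $\theta\approx\pi$ would get wrong.
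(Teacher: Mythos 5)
Your proposal follows exactly the paper's route: solve the functional equation for $L(s)$, use $\overline{L(1-\bar s)}\to 1$ (valid since ${\rm Re}(1-\bar s)\geq 1+r\sin\delta\to+\infty$ on both sub-sectors), and feed the Gamma ratio into Stirling's formula, summing with $\sum_j\lambda_j=d_L/2$. Your upper-sector bookkeeping is correct and reproduces all three coefficients of the lemma.

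The gap is the one step you wave away: the lower sub-sector is \emph{not} ``handled identically''. You yourself wrote down the correct principal arguments --- $\arg z\to\theta-2\pi$ and $\arg z'\to\theta-\pi$ for $\theta\in(\pi+\delta,\frac{3}{2}\pi-\delta)$, as Stirling's formula requires --- but you did not carry them through. Doing so, the argument terms contribute $\lambda_j r\sin\theta\,[(\theta-\pi)+(\theta-2\pi)]=\lambda_j(2\theta-3\pi)\,r\sin\theta$, so after summation the $r\sin\theta$ coefficient is $(\theta-\frac{3\pi}{2})d_L\sin\theta$, differing from the asserted $(\theta-\frac{\pi}{2})d_L\sin\theta$ by $\pi d_L\sin\theta\, r$, an order-$r$ discrepancy that cannot be absorbed into $O(\log r)$. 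Your fallback, the reflection $s\mapsto\bar s$, confirms this rather than repairing it: it sends a lower-sector $\theta$ to $2\pi-\theta$ in the upper sector, and transporting the upper formula back (using $\cos(2\pi-\theta)=\cos\theta$, $\sin(2\pi-\theta)=-\sin\theta$) again yields $(\theta-\frac{3\pi}{2})d_L\sin\theta\,r$. A sanity check: for $L$ with real coefficients (e.g.\ $\zeta$), $\log|L(re^{i\theta})|$ is invariant under $\theta\mapsto 2\pi-\theta$, and $(\theta-\frac{3\pi}{2})\sin\theta$ respects this symmetry while $(\theta-\frac{\pi}{2})\sin\theta$ does not. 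So a faithful execution of your plan proves a \emph{different} formula on the lower sub-sector than the one stated; in other words, the lemma as printed is itself incorrect there, and the paper's own proof makes precisely the same branch slip (it writes $\arg(\lambda_j s+\mu_j)\sim\arg s$ without fixing the branch, silently using $\arg s=\theta\in(\pi,\frac{3}{2}\pi)$ where the principal value $\theta-2\pi$ is required). The slip is harmless downstream: the proofs of Theorems \ref{thm-meromorphic} and \ref{thm-meromorphic-positive-degree} use only the leading $r\log r$ term, or differences of this formula for two $L$-functions of equal degree, in which the $r\sin\theta$ term cancels. But your write-up should either state the lower-sector formula correctly or restrict to the upper sub-sector and reduce the other case to it by conjugation.
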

\begin{proof}
It follows from the functional equation, we have
\begin{equation}\label{10-26-1}
|L(s)|=Q^{1-2 {\rm Re}s}|L(1-\overline{s})|
\prod_{j=1}^{K}\frac{|\Gamma(\lambda_j(1-\overline{s})+\mu_j)|}
{|\Gamma(\lambda_j s+\mu_j)|}
\end{equation}
By Stirling's formula (see, e.g. \cite[p.151]{13}),
\begin{equation}\label{10-26-2}
\log \Gamma(z)=(z-\frac{1}{2})\log z-z+\frac{1}{2}\log (2\pi)+O\left(\frac{1}{|z|}\right)
\end{equation}
as $|z|\to\infty$, uniformly for $z\in \Omega_\delta=\{z:|\arg z|\leq \pi-\delta\}$ with any $0<\delta<\pi/2$. Since $L$ satisfies Ramanujan hypothesis and $a(1)=1$, there exists positive constants $\sigma_0,C_0$ such that
\begin{equation}\label{10-26-3}|L(1-\overline{s})-1|<C_02^{{\rm Re}s}\leq C_0 |s|^{-1} \end{equation}
for ${\rm Re}s<-\sigma_0$ and $\arg s\in(\pi/2+\delta,3\pi/2-\delta)$. Taking \eqref{10-26-2} and \eqref{10-26-3} into \eqref{10-26-1}
 we have
\begin{equation}\label{10-26-4}
\begin{split}
\log |L(s)|=&\sum_{j=1}^{K}{\rm Re}\Big\{\log \Gamma(\lambda_j(1-\overline{s})+\mu_j)-
\log \Gamma(\lambda_j s+\mu_j)\Big\}\\
&+(1-2{\rm Re}s)\log Q+O(|s|^{-1})\\
=&\sum_{j=1}^{K}{\rm Re}\Big\{-\lambda_j \overline{s}\log(-\overline{s})-2\lambda_js\log\lambda_j
-\lambda_js\log s+2\lambda_j s\Big\}\\
&-2{\rm Re}s\log Q+O(\log |s|)
\end{split}
\end{equation}
as $s\to\infty$, if all $\lambda_j(1-\overline{s})+\mu_j,\lambda_j s+\mu_j\in \Omega_\delta$ and $\arg s\in(\pi/2+\delta,3\pi/2-\delta)$.
We write $s=re^{i\theta}$, it is clear that $$\arg(\lambda_j(1-\overline{s})+\mu_j)\sim \arg(-\overline{s})\quad \text{and} \quad \arg(\lambda_j s+\mu_j)\sim\arg s$$ as $s\to\infty$. Then take $\theta\in (\frac{\pi}{2}+\delta, \pi-\delta)\cup(\pi+\delta, \frac{3}{2}\pi-\delta)$, we get the desired estimate from \eqref{10-26-4} where
$-\cos\theta>0$.
\end{proof}

\begin{proposition}\label{prop-degree}
Let $h$ be a meromorphic function such that $\rho(h)\in [0, 1)$, and let $L_1,L_2$ be two $L$-functions. If $L_1-h$ and $L_2-h$ have the same zeros counting multiplicities, then $L_1$ and $L_2$ have the same degree.
\end{proposition}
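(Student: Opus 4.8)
The plan is to examine the quotient $F:=(L_1-h)/(L_2-h)$ and to exploit a mismatch of growth rates: each $L_i-h$ has Nevanlinna characteristic of exact order $r\log r$ (governed by $d_{L_i}$), whereas the shared-zero hypothesis will force $F$ itself to have characteristic only $O(r)$. Comparing the two sides of the identity $L_1-h=F\cdot(L_2-h)$ through subadditivity of $T(r,\cdot)$ will then pin down $d_{L_1}=d_{L_2}$.

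First I would record the growth of each piece. Since $\rho(h)\in[0,1)$ we have $T(r,h)=o(r)$. Each non-constant $L$-function has order $1$ and characteristic $T(r,L_i)=\frac{d_{L_i}}{\pi}\,r\log r+O(r)$; this can be read off from Lemma~\ref{lemma-estimate} by integrating $\log|L_i(re^{i\theta})|$ over the left half-plane (where $-\cos\theta>0$) and noting $N(r,L_i)=O(\log r)$. Combining these with the elementary inequalities $|T(r,L_i-h)-T(r,L_i)|\le T(r,h)+O(1)$ yields
$$T(r,L_i-h)=\frac{d_{L_i}}{\pi}\,r\log r+o(r\log r)\qquad(i=1,2).$$
One may assume $L_i-h\not\equiv0$, since $h\equiv L_i$ with $L_i$ non-constant would force $\rho(h)=1$.

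The heart of the argument is to prove $T(r,F)=O(r)$. Here I would use the hypothesis that $L_1-h$ and $L_2-h$ have the same zeros counting multiplicities, so that every zero cancels in $F$. For the poles: away from $s=1$ the only poles of $L_i-h$ are the poles of $h$, and at such a point both numerator and denominator have a pole of the same order (because each $L_i$ is holomorphic there), so these cancel as well and $F$ is finite and non-zero. Thus the zeros and poles of $F$ are confined to $s=1$, whence $N(r,F)+N(r,1/F)=O(\log r)$. Since $\rho(F)\le\max\{\rho(L_1-h),\rho(L_2-h)\}\le1$, Hadamard's factorization gives $F(s)=(s-1)^{k}e^{as+b}$ for some integer $k$ and constants $a,b$, and therefore $T(r,F)=\frac{|a|}{\pi}r+O(\log r)=O(r)$.

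Finally I would combine the two estimates. From $L_1-h=F\,(L_2-h)$ and subadditivity, $T(r,L_1-h)\le T(r,F)+T(r,L_2-h)+O(1)$, so $\frac{d_{L_1}}{\pi}r\log r\le\frac{d_{L_2}}{\pi}r\log r+o(r\log r)$, giving $d_{L_1}\le d_{L_2}$; the reciprocal relation $L_2-h=F^{-1}(L_1-h)$ together with $T(r,1/F)=T(r,F)+O(1)$ gives $d_{L_2}\le d_{L_1}$, and hence $d_{L_1}=d_{L_2}$. The main obstacle is precisely the middle step: the crude order bound $\rho(F)\le1$ only yields $T(r,F)=O(r^{1+\varepsilon})$, which is \emph{not} $o(r\log r)$, so it is essential to upgrade it to the factorization $F=(s-1)^{k}e^{as+b}$, and this upgrade is where the shared-zero hypothesis — killing all zeros of $F$ and, in tandem with the pole structure of $h$, all poles of $F$ except at $s=1$ — is genuinely used.
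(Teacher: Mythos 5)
Your proposal is correct and follows essentially the same route as the paper: Hadamard factorization of the quotient $(L_1-h)/(L_2-h)$ (using the shared zeros and the cancellation of the poles of $h$) to obtain the form $(s-1)^{k}e^{as+b}$, then a comparison of Nevanlinna characteristics via subadditivity together with the growth formula $T(r,L)=\frac{d_L}{\pi}r\log r+O(r)$. The only cosmetic differences are that you spell out in detail why the factorization holds and work with $T(r,L_i-h)$, whereas the paper asserts the factorization directly, compares $T(r,L_i)$, and cites Steuding's Theorem 7.9 for the growth formula rather than deriving it from Lemma \ref{lemma-estimate}.
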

\begin{proof}
By Hadamard factorization theorem, we have
\begin{equation}\label{10-26-5}\frac{L_1(s)-h(s)}{L_2(s)-h(s)}=(s-1)^{m}\exp(as+b)\end{equation}
for some integer $m$ and constants $a, b\in\mathbb{C}$. It is clear that $$L_1=h+(L_2-h)(s-1)^{m}e^{as+b}.$$
Recall the basic fact, see e.g. \cite[p.5]{Hayman}, that
\begin{equation}\label{23-9-19}T(r,fg)\leq T(r,f)+T(r,g),\,\, T(r,f+g)\leq T(r,f)+T(r,g)+\log 2.\end{equation}
By this fact and $\rho(h)<1$, it follows from \eqref{10-26-5} that
$$
T(r, L_1)\leq T(r, L_2)+2T(r, h)+O(r)+O(\log r)=T(r,L_2)+O(r).
$$
Similarly, we also have
$$
T(r, L_2)\leq T(r, L_1)+O(r).
$$
Noting \cite[Theorem 7.9]{St}, that is $$T(r, L)=\frac{d_L}{\pi}r\log r +O(r),$$
the above inequalities between $T(r,L_1)$ and $T(r,L_2)$ leads $d_{L_1}=d_{L_2}$.
\end{proof}

For a canonical product, Borel has given its lower bound outside neighborhoods of its zeros (see \cite[Theorem 5.13]{Hayman}). Based on the fact, it is known that there is a upper bound for the modulus of a meromorphic function when $z$ is away from its poles. For convenience of readers, we state the result in the following lemma, which
comes from \cite[Lemma 1]{Ch} and its proof.

\begin{lemma}\label{meromorphic-estimate}
Suppose that $f(z)$ is a meromorphic function with $\rho(f)=\beta<\infty$ and the sequence of non-zero poles $\{p_j\}_{j=1}^\infty$ arranged according to increasing moduli. Then for any given $\varepsilon>0$, set $O=\bigcup_{j=1}^\infty\{z:\,|z-p_j|\leq |p_j|^{-(\beta+\frac{\varepsilon}{2})}\}$, we have
$$|f(z)|\leq \exp\{r^{\beta+\varepsilon}\}\qquad \text{for}\quad z\not\in O,$$
as $|z|=r\to\infty$. Specially, the set $E=\{|z|: z\in O\}$ is of finite Lebesgue measure.
\end{lemma}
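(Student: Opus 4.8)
The plan is to realize $f$ as a quotient of two entire functions of order at most $\beta$ and to bound the numerator and denominator separately. By the Hadamard factorization of a meromorphic function of finite order, I would write $f = g/\Pi$, where $\Pi$ is the canonical product formed over the non-zero poles $\{p_j\}$ and $g = f\Pi$ is entire. Since $N(r,f)\le T(r,f)$ forces the exponent of convergence $\lambda$ of the poles to satisfy $\lambda \le \rho(f) = \beta$, the canonical product has order $\rho(\Pi) = \lambda \le \beta$, and hence $g$ is entire of order at most $\max\{\rho(f),\rho(\Pi)\} = \beta$.

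For the numerator, the definition of order gives $\log M(r,g) \le r^{\beta + \varepsilon/2}$ for all sufficiently large $r$, hence $|g(z)| \le \exp(r^{\beta+\varepsilon/2})$ on $|z| = r$. For the denominator I would invoke Borel's lower bound for canonical products (Hayman, Theorem 5.13): since $\rho(\Pi) \le \beta$, choosing the auxiliary exponent $\eta := \beta + \varepsilon/2 - \rho(\Pi) \ge \varepsilon/2 > 0$ yields
$$\log|\Pi(z)| > -r^{\rho(\Pi)+\eta} = -r^{\beta+\varepsilon/2}$$
for $z$ outside the union $O = \bigcup_j\{|z - p_j| \le |p_j|^{-(\rho(\Pi)+\eta)}\} = \bigcup_j\{|z-p_j|\le |p_j|^{-(\beta+\varepsilon/2)}\}$, which is exactly the set $O$ in the statement. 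Combining the two estimates, for $z \notin O$ with $|z| = r$ large,
$$|f(z)| = \frac{|g(z)|}{|\Pi(z)|} \le \exp(r^{\beta+\varepsilon/2})\cdot\exp(r^{\beta+\varepsilon/2}) = \exp\bigl(2r^{\beta+\varepsilon/2}\bigr) \le \exp(r^{\beta+\varepsilon}),$$
the final inequality holding once $r^{\varepsilon/2} \ge 2$.

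For the measure statement, each disk $\{|z-p_j|\le|p_j|^{-(\beta+\varepsilon/2)}\}$ meets the circle $|z|=r$ only for $r$ in an interval of length at most $2|p_j|^{-(\beta+\varepsilon/2)}$, so $E = \{|z| : z \in O\}$ is covered by these intervals and $meas(E) \le 2\sum_j |p_j|^{-(\beta+\varepsilon/2)}$. Because $\beta + \varepsilon/2 > \beta \ge \lambda$ strictly exceeds the exponent of convergence of the poles, this series converges, giving $meas(E) < \infty$.

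The main obstacle is matching the prescribed disk radii $|p_j|^{-(\beta+\varepsilon/2)}$ with the precise form of Borel's theorem. The delicate point is that the order of the pole-product $\Pi$ may be strictly smaller than $\beta$, so one cannot simply plug $\beta$ into the estimate; instead one must feed the correct auxiliary exponent $\eta = \beta + \varepsilon/2 - \rho(\Pi)$ into Borel's bound so that both the excluded disks and the resulting lower bound emerge with exponent exactly $\beta + \varepsilon/2$. Once this calibration is made, the remaining steps are routine bookkeeping with the definition of order and the convergence exponent.
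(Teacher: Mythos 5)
Your proposal is correct and is essentially the paper's own route: the paper does not prove this lemma directly but refers to Chen's Lemma 1 and Borel's lower bound for canonical products (Hayman, Theorem 5.13), whose underlying argument is exactly your decomposition $f=g/\Pi$, the maximum-modulus bound on the entire numerator, and the Borel estimate on $\Pi$ calibrated so the excluded disks have radii $|p_j|^{-(\beta+\varepsilon/2)}$, with the finite-measure claim following from $\sum_j |p_j|^{-(\beta+\varepsilon/2)}<\infty$. The only detail you skip is a possible pole of $f$ at the origin (the statement lists only non-zero poles), which is absorbed by an extra factor $z^{-k}$ and only strengthens the bound for $|z|\ge 1$.
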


\section{Proof of Theorems \ref{thm-entire} and \ref{thm-rational}}
We will first prove Theorem \ref{thm-rational}. Then in the proof of Theorem \ref{thm-entire}, we can only focus of
treating the case $h$ transcendental. \vskip 2mm
\noindent{\bf Proof of Theorem \ref{thm-rational}.}\,\,Due to the Theorem of Cardwell and Ye \cite{CY}, we may assume the condition
$$\lim\limits_{s\rightarrow\infty}h(s)=1.$$
Thus, $h(s)$ has the Laurent's expansion at $s=\infty$ as
$$
h(s)=1+\sum_{k=j_0}^{\infty}\frac{a_k}{s^{k}},
$$
where $j_0$ is some positive integer, $a_{j_0}\neq 0$. It is obvious that there exist positive constants $\sigma_0,C_0$ such that for $s\in \Omega=\{z:{\rm Re} z>\sigma_0,|{\rm Im} z|\leq {\rm Re} z\}$, we have
$$|L_j(s)-1|\leq C_0 2^{-{\rm Re}s},\quad (j=1,2)$$
$$2^{-(1+j_0)}\frac{|a_{j_0}|}{({\rm Re}s)^{j_0}}\leq|h(s)-1|\leq 2\frac{|a_{j_0}|}{({\rm Re}s)^{j_0}}.$$
By the fact that $$\lim_{x\to+\infty}\frac{2^{-x}}{x^{-j_0}}=\lim_{x\to+\infty}\frac{x^{j_0}}{2^x}=0,$$
then we get
$$\lim\limits_{s\in\Omega\to\infty}\frac{L_1-h}{L_2-h}=\lim\limits_{s\in\Omega\to \infty}\frac{(L_1-1)-(h-1)}{(L_2-1)-(h-1)}=1.$$
Noting that \eqref{10-26-5} and the above limit, it yields immediately that $1$ is the asymptotic value of $\varphi(s)=(s-1)^{m}e^{as+b}$. When $a\not=0$, $\varphi$ has only two asymptotic values
$0,\infty$, so we have $a=0$. Clearly, $(s-1)^m e^b\to 0,\infty,e^b$ as $s\to \infty$ according to $m<0,m>0$ and $m=0$, respectively. From this, we further get $m=b=0$. Therefore, $L_1\equiv L_2$, which completes the proof.\vskip 2mm

\noindent {\bf Proof of Theorem \ref{thm-entire}.} \,\,We now assume that $L_1\not\equiv L_2$ in the following, and complete the proof by reduction to absurdity.\vskip 2mm
\par We first deal with the case that $\rho(h)>1$. Since $L_1-h$ and $L_2-h$ have the same zeros counting multiplicities, we know that
\begin{equation}\label{10-28-3}
\varphi(s):=\frac{L_1-h}{L_2-h}=(s-1)^{m}e^{\phi},
\end{equation}
where $\phi(s)$ is an entire functions, and $m$ is some integer.
It follows from \eqref{23-9-19} and Nevanlinna's first main theorem that
\begin{equation}\label{eq-1}
\begin{aligned}
T(r, h)&\leq T\big(r, \frac{1}{L_2-h}\big)+T(r, L_2)+O(1)\\
&\leq T\big(r, \frac{L_1-L_2}{L_2-h}\big)+T(r, L_1-L_2)+T(r, L_2)+O(1)\\
&\leq T(r, \varphi-1)+2T(r, L_2)+T(r, L_1)+O(1)\\
&\leq T(r, \varphi)+2T(r, L_2)+T(r, L_1)+O(1).
\end{aligned}
\end{equation}
Next, we will estimate $T(r,\varphi)$ by the second main theorem. Since
$$N(r,fg)\leq N(r,f)+N(r,g),\,\,N(r,f+g)\leq N(r,f)+N(r,g),$$
see e.g.\cite[p.5]{Hayman}, obviously we have
\begin{equation*}\label{eq-2}
\begin{aligned}
N(r, \frac{1}{\varphi-1})=N\big(r, \frac{L_2-h}{L_1-L_2}\big)\leq N(r,L_2)+N(r, \frac{1}{L_2-L_1})+O(1).
\end{aligned}
\end{equation*}
Moreover, noting that $N(r, 1/\varphi)+N(r, \varphi)=|m|\log r$, we obtain
\begin{equation}\label{10-28-1}
\begin{split}
T(r,\varphi)\leq &N(r,\frac{1}{\varphi})+N(r,\varphi)+N(r,\frac{1}{\varphi-1})+o(T(r,\varphi))\\
\leq & N(r,L_2)+N(r,\frac{1}{L_2-L_1})+O(\log r)+o(T(r,\varphi))\\
\leq &T(r,L_1)+2T(r,L_2)+O(\log r)+o(T(r,\varphi)),
\end{split}
\end{equation}
possibly outside a set of $r$ with finite Lebesgue measure. Substituting \eqref{10-28-1} into \eqref{eq-1}, together with \cite[Theorem 7.9]{St}, it yields
$$
T(r, h)=O(T(r,L_1)+T(r,L_2)+\log r)=O(r\log r),
$$
which implies that $\rho(h)\leq 1$, a contradiction.\vskip 2mm
\par Next, we turn to the case that $\rho(h)<1$. By Hadamard factorization theorem, $\phi(s)$ is a polynomial of degree at most one, $as+b$ say, so $\varphi(s)=(s-1)^m\exp(as+b)$ with  constants $a,b\in \mathbb{C}$. At the same time, since $\mu(h)\leq \rho(h)< 1$, $N(r,h)=0$, applying Lemma 1 to $h$ gives $meas(TD(h))>\pi$. Let $$\mathcal{B}=TD(h)\backslash \big[\frac{\pi}{2}, \frac{3\pi}{2}\big],$$ then $meas(\mathcal{B})>0$. Due to the fact $
\lim_{{\rm Re}s\rightarrow +\infty}L(s)=1$, clearly for any $\theta
\in \mathcal{B}$, there exist a sequence $\{s_n\}_{n=1}^{\infty}$
such that $\lim_{n\to\infty}\arg s_n=\theta$ and $$\lim_{n\rightarrow\infty}\frac{\log|h(s_n)|}{\log |s_n|}=+\infty,\quad \text{so}\,\,\lim_{n\rightarrow\infty}\varphi(s_n)=1.$$
This means that every $\theta\in \mathcal{B}$ is a $1$-value limiting direction of $\varphi$. When $a\not=0$,
clearly $\varphi(s)$ has at most two possible $1$-value limiting directions $\theta$, which satisfy ${\rm Re}(ae^{i\theta})=0$. Since $meas(\mathcal{B})>0$ this is impossible, so we have $a=0$. In that case, we have $m=0$ and $\varphi(s)=e^b=1$ which contradicts the assumption $L_1\not\equiv L_2$.
\vskip 2mm
\par Therefore, from the above argument, we have $L_1\equiv L_2$ if $\rho(h)\not=1$.\vskip 2mm
\begin{remark}\label{remark-pole}
The method in the above proof also works for meromorphic $h$ which satisfies $N(r,h)=O(r\log r)$ and if $\mu(h)<1$,
\begin{equation}\label{10-28-4}
\arcsin\sqrt{\frac{\delta(\infty, h)}{2}}>\frac{\mu(h)\pi}{4},\quad \text{that is}\quad \frac{4}{\mu(h)}\arcsin\sqrt{\frac{\delta(\infty,h)}{2}}>\pi.
\end{equation}
\par When $\rho(h)>1$, the proof is similar but has some minor changes. For example, \eqref{10-28-3} still holds since a pole of $h$, excepted for $s=1$, cannot be a pole of $\varphi$. At the same time, the poles of $1/(\varphi-1)$ possibly contain the poles of $h$ except the zeros of $L_2-L_1$ and the point $s=1$. Thus, \eqref{10-28-1} becomes $$T(r,\varphi)\leq N(r,h)+T(r,L_1)+2T(r,L_2)+o(T(r,\varphi))=O(r\log r).$$
It follows from \eqref{eq-1} that $T(r,h)=O(r\log r)$, so $\rho(h)\leq 1$, a contradiction.
\par When $\rho(h)\in [0, 1)$, the condition \eqref{10-28-4} also guarantees that $meas(\mathcal{B})>0$ by Lemma \ref{wang-yao}, so the proof just follows directly.
\end{remark}
\section{Proof of Theorems \ref{thm-meromorphic} and \ref{thm-meromorphic-positive-degree}}

Since in the proof of these two theorems, the discussion is the same for the case $\rho(h)>1$, we will first prove Theorem \ref{thm-meromorphic}. Then we focus on dealing with the case that $\rho(h)<1$ in the proof of Theorem \ref{thm-meromorphic-positive-degree}.\vskip 2mm

\noindent {\bf Proof of Theorem \ref{thm-meromorphic}.}\,\,Similarly as \eqref{10-28-3}, it follows from $\rho(h)<\infty$ and Remark \ref{remark-pole} that there exists a polynomial $p$ of degree $d$ and some integer $m$ such that
$$
\varphi(s)=\frac{L_1-h}{L_2-h}=(s-1)^m e^p.
$$
We assume that $L_1\not\equiv L_2$, then $\varphi(s)\not\equiv 1$. From this, $h$ can be solved out as
$$h=\frac{\varphi L_2-L_1}{\varphi-1}=\frac{(s-1)^m e^p L_2-L_1}{(s-1)^m e^p-1}.$$
It is clear that $\rho(h)\leq \max\{d, 1\}$. When $\rho(h)>1$, obviously $1<\max\{d,1\}$, so $d\geq 2$. Thus $\rho(h)=d\in \mathbb{N}$, which is a contradiction.\vskip 2mm
\par Now, we turn to the case that $\rho(h)<1$, then again $
\varphi(s)=(s-1)^{m}e^{as+b}$ for some $a, b\in\mathbb{C}$, $m\in\mathbb{Z}$.
Since $L_1$ and $L_2$ have the same functional equations, we have $d_{L_1}=d_{L_2}$ and
 \begin{equation}\label{same-funct-eq}
\frac{L_1(s)}{L_2(s)}=\frac{\overline{L_1(1-\overline{s})}}
{\overline{L_2(1-\overline{s})}}.
\end{equation} At the same time, applying Lemma \ref{meromorphic-estimate} to $h$ yields that for any given $\varepsilon>0$,
\begin{equation}\label{eq-upper}
|h(s)|\leq \exp\{r^{\rho(h)+\varepsilon}\}
\end{equation}
holds for any $z$ satisfying $|s|=r$ outside a set $E$ of finite Lebesgue measure and $r\to\infty$.
If $d_{L_1}=d_{L_2}>0$, then by Lemma \ref{lemma-estimate},  we have
\begin{equation}\label{positive-degree}
\lim_{r\rightarrow\infty}\frac{\log |L_j(r e^{i\theta})|}{(-d_{L_j}\cos\theta) r\log r}=1,\quad (j=1,2)
\end{equation}
for any $\theta\in \mathcal{I}_{\delta}:=(\frac{\pi}{2}+\delta,\pi-\delta)\cup (\pi+\delta,\frac{3}{2}\pi-\delta)$. Combining \eqref{eq-upper} and \eqref{positive-degree} gives that there exists a sequence of $\{s_n=r_ne^{i\theta}\}_{n=1}^{\infty}$ with $\theta\in \mathcal{I}_{\delta}$ and $r_n\not\in E$ such that
\begin{equation}\label{eq-key-1}
\lim_{n\rightarrow\infty}
\frac{L_j(r_n e^{i\theta})-h(r_ne^{i\theta})}{L_j(r_n e^{i\theta})}=1,\quad (j=1,2).
\end{equation}
Noting that $\lim\limits_{{\rm Re}s\to+\infty}L_j(s)=1(j=1,2)$, together with \eqref{same-funct-eq} and \eqref{eq-key-1}, we have
\begin{equation}\label{eq-key-2}
\lim_{n\rightarrow\infty}\varphi(s_n)=\lim_{n\rightarrow\infty}\frac{L_1(r_n e^{i\theta})}
{L_2(r_n e^{i\theta})}=\lim_{n\rightarrow\infty}\overline{\left(\frac{L_1(1-r_n e^{-i\theta})}
{L_2(1-r_n e^{-i\theta})}\right)}=1,
\end{equation}
which means that every $\theta\in \mathcal{I}_{\delta}$ is $1$-value limiting direction of $\varphi$. From this, we can also get a contradiction by the
argument in the proof of Theorem \ref{thm-entire}. Finally the remaining subcase is that $d_{L_1}=d_{L_2}=0$, then
$$L_j(s)=\omega Q^{1-2s}\overline{L_j(1-\overline{s})},\quad (j=1,2)$$
where $\omega=1$, and $Q>1$ by Proposition 1. Clearly, there exist positive constants $\sigma_0,C_0$ such that for $s\in \Omega=\{z:\,{\rm Re} z<-\sigma_0, |{\rm Im} z|<-{\rm Re} z\}$, we have
$$|\overline{L_j(1-\overline{s})}-1|\leq C_0 2^{{\rm Re}s},\quad (j=1,2).$$
Then for $s\in \Omega$, we get
$$L_j(s)=\omega Q^{1-2s}(1+O(2^{{\rm Re}s})),\quad (j=1,2)$$
with ${\rm Re}s<0$. Combining this with \eqref{eq-upper} yields that for $\theta\in (\frac{3}{4}\pi,\frac{5}{4}\pi)$, we get
$$
\lim_{r\rightarrow\infty}
\frac{L_j(re^{i\theta})-h(re^{i\theta})}{L_j(re^{i\theta})}=1,\quad (j=1,2)
$$
 so
$$
\lim_{r\rightarrow\infty}\varphi(re^{i\theta})=\lim_{r\rightarrow\infty}\frac{L_1(re^{i\theta})}
{L_2(re^{i\theta})}=1.
$$
Similarly, this property of $\varphi$ gives a contradiction.\vskip 2mm
\par Summing up the above argument, we must have $L_1\equiv L_2$, which completes the proof of Theorem \ref{thm-meromorphic}.\vskip 2mm

\noindent {\bf Proof of Theorem \ref{thm-meromorphic-positive-degree}.}\,\,The case that $\rho(h)>1$ is already done in the proof of Theorem \ref{thm-meromorphic}, so we only deal with the case  that $\rho(h)<1$ in the following. We assume that $L_1\not\equiv L_2$. By Proposition \ref{prop-degree}, it is clear that $d_{L_1}=d_{L_2}=\zeta>0$. Similarly, we have
\begin{equation}\label{small h}
\varphi(s)=\frac{L_1-h}{L_2-h}=(s-1)^{m}e^{as+b}
\end{equation}
for some $a, b\in \mathbb{C}$, $m\in\mathbb{Z}$.\vskip 2mm
\par If $a=0$, then $\varphi$ is rational. Because of Theorem \ref{thm-entire} and Remark \ref{remark-pole}, we focus on the case that $h$ has infinitely many poles $\{p_n\}_{n=1}^{\infty}$ with $p_n\not=1$. Thus, we have $\varphi(p_n)=1$, which implies $\varphi(s)\equiv 1$, a contradiction.\vskip 2mm
\par Now, we have $a\not=0$, and write $a=\rho e^{i\phi}$ for some $\rho>0$ and $\phi\in [0, 2\pi)$.
By Lemma \ref{lemma-estimate}, for any $0<\delta<\frac{\pi}{4}$, we have
\begin{equation}\label{11-04-1}
\begin{split}
\log |L_\eta(re^{i\theta})|=&-(\zeta\cos \theta) r\log r-2\big[\log Q_\eta +\sum_{j=1}^{K_\eta}\lambda_{\eta,j}\log\lambda_{\eta,j}\big]r\cos\theta\\
&+\big[\cos\theta+(\theta-\frac{\pi}{2})\sin\theta\big]\zeta r+O(\log r),\quad (\eta=1,2)
\end{split}
\end{equation}
as $r\to\infty$, uniformly for $\theta\in \mathcal{I}_\delta=(\frac{\pi}{2}+\delta, \pi-\delta)\cup(\pi+\delta, \frac{3}{2}\pi-\delta)$, where $Q_\eta,\lambda_{\eta,j}$ are the constants in the functional equation satisfied by $L_\eta$. Substituting \eqref{eq-upper} and \eqref{11-04-1} into \eqref{small h} yields that
\begin{equation*}
(s-1)^me^{as+b}=\frac{L_1(s)}{L_2(s)}(1+o(1)),\quad \text{as}\,\,r\to\infty,
\end{equation*}
where $s=re^{i\theta}$ with $r\not\in E$ and $\theta\in \mathcal{I}_\delta$. This implies
\begin{equation}\label{eq-nice}
\begin{split}
&\frac{|L_1(re^{i\theta})|}{|L_2(re^{i\theta})|} r^{-m}
|\exp(-\rho re^{i(\theta+\phi)}-b)|\\
&\qquad =r^{-m}|e^{-b}|\exp\left\{[A\cos\theta-\rho\cos(\theta+\phi)]r+O(\log r)\right\}\to 1,
\end{split}
\end{equation}
as $r\not\in E\to\infty$ and $\theta\in \mathcal{I}_\delta$, where the constant $A$ is actually
\[
A=2\left(\log Q_2-\log Q_1\right)+2\left(\sum_{j=1}^{K_2}\lambda_{2, j}\log\lambda_{2, j}
-\sum_{j=1}^{K_1}\lambda_{1, j}\log\lambda_{1, j}\right).\]
It follows from \eqref{eq-nice} that
$$A\cos\theta-\rho\cos(\theta+\phi)=0.$$
Since $\theta$ varies in $\mathcal{I}_\delta$, we must have
$$A-\rho\cos\phi=0,\quad \rho\sin\phi=0.$$
This proves that $\phi=0$ or $\pi$, which means that $a\in \mathbb{R}$.\vskip 2mm
\par
We rewrite the functional equation in the axiom (iii) of $L_1,L_2$ as
\begin{equation}\label{eq-1-1}
L_\eta(s)=\chi_\eta(s)\overline{L_\eta(1-\overline{s})},\quad (\eta=1,2)
\end{equation}
where
$$\chi_\eta(s):=\omega_\eta Q_\eta^{1-2s}\prod_{j=1}^{K_\eta}\frac{\Gamma(\lambda_{\eta,j}
(1-s)+\overline{\mu_{\eta,j}})}{\Gamma(\lambda_{\eta,j}
s+\mu_{\eta,j})},$$
$|\omega_\eta|=1$, $Q_\eta>0, \lambda_{\eta,j}>0$ and $\hbox{Re}\mu_{\eta,j}\geq 0$.
The ``trivial" zeros of $L_\eta(s)$ come from the zeros of $\chi_\eta$, which are the poles of $\Gamma(\lambda_{\eta,j}s+\mu_{\eta,j})(j=1,2,\cdots,K_\eta)$, with the possible exception of $s=0$. Thus, the set $S_\eta$ of all ``trivial" zeros of $L_\eta(s)(\eta=1,2)$ is the union of sets as
\begin{equation}\label{zero-form}S_{\eta,j}=\left\{-\frac{m+\mu_{\eta,j}}{\lambda_{\eta,j}}:\,\, m=0, 1, 2,...\right\},\,\,(j=1,2,\cdots,K_\eta).
\end{equation}
Clearly, the real part of the points in $S_{\eta,j}$ tends to $-\infty$ as $m\to\infty$. Further from \cite{GHK}, there is a constant $D>0$ such that the distance between any distinct two zeros in all $S_{\eta,j}(\eta=1,2;j=1,2,\cdots,K_\eta)$ is more than $D$.
\vskip 2mm
\par We observe that there exists a sequence of common ``trivial" zeros $\{s_n\}_{n=1}^\infty$ of $L_1,L_2$ satisfying $s_n=s_1+(n-1)t$ with some constant $t<0$. Indeed, if there are infinitely many common trivial zeros of $L_1,L_2$, then there exist $j_\eta\in\{1,2,\cdots,K_\eta\}(\eta=1,2)$ such that
$S_{1,j_1}\cap S_{2,j_2}$ contains infinitely many elements. By the property of arithmetic sequence, it is easy to see the existence of $\{s_n\}_{n=1}^\infty$. The remained case is that $L_1,L_2$ have only finitely many common trivial zeros. Then for each $S_{1,j}$, we get the sequence $\{\xi_m\}_{m=N}^\infty$ with $\xi_m=-(m+\mu_{\eta,j})/\lambda_{\eta,j}$ and some positive integer $N$ in which $L_1(\xi_m)=0$ and $L_2(\xi_m)\not=0$.
We can take some $d>0$ such that $L_1$ has no other zeros in the disk $D(\xi_m, d)$
 and $L_2$ has no zeros in $D(\xi_m, d)$. There exists a subsequence $\{m_k\}_{k=1}^{\infty}$ such that $h$ is holomorphic and has no zeros in the closure of $D(\xi_{m_k}, d)$. Otherwise, $$N(r, h)+N(r,1/h)\geq Cr,$$
 where $C$ is a positive constant. It implies that $\rho(h)\geq 1$ which contradicts the condition that $\rho(h)<1$. As in \cite{GHK}, applying Stirling's formula to estimate $\chi_\eta(\eta=1,2)$ in $D(\xi_{m_k}, d)$, together with \eqref{eq-1-1} and $a(1)=1$, it yields that there exists a constant $c>0$ such that if $k$ is sufficiently large,
 \begin{equation}\label{11-07-1}
 |L_\eta(s)|\geq 2^{-|\xi_{m_k}|-d}c d^{K_\eta} Q^{2|\xi_{m_k}|}\exp(|\xi_{m_k}|\sum \lambda_{\eta, j}\log(|\xi_{m_k}\lambda_{\eta, j}|))
 \end{equation}
 for any $s\in \partial D(\xi_{m_k}, d)$ and $\eta=1, 2$. Considering \eqref{eq-upper} or essentially Lemma \ref{meromorphic-estimate}, we see that there exists $d_0>0$ such that if $d<d_0$ and $k$ is large enough, the right hand side of \eqref{11-07-1} is more than $2|h(s)|$. By Rouche's theorem to $L_\eta$ and $L_\eta-h(\eta=1,2)$, we conclude $L_\eta-h$ has the same number of zeros as $L_\eta$ for $\eta=1,2$ in $D(\xi_{m_k}, d)$. Thus, $L_1-h$ has zero in $D(\xi_{m_k}, d)$, while $L_2-h$ has no zero in the same disk. This contradicts our assumption that $L_1-h$ and $L_2-h$ have the same zeros counting multiplicities.\vskip 2mm
\par
 Now, we have a sequence of common trivial zeros $\{s_n\}_{n=1}^{\infty}$ of $L_1$ and $L_2$ with $s_n=s_1+(n-1)t$ for some constant $t<0$. Again, by $\rho(h)< 1$, there exists a subsequence $\{s_{n_k}\}_{k=1}^\infty$ such that $h(s_{n_k})\neq 0, \infty$. It follows that
 $$\varphi(s_{n_k})=1,\quad \text{for\,\,any}\quad m\in \mathbb{N}.$$
 This gives that
 $\arg s=\pi$ is 1-value limiting direction of $\varphi(s)=(s-1)^{m}e^{as+b}$ with $a\in \mathbb{R}\backslash\{0\}$. Since $(s-1)^m e^{as+b}(a\in \mathbb{R}\backslash\{0\})$ has only two possible 1-value limiting direction $\theta=\frac{\pi}{2}$ or $\theta=\frac{3\pi}{2}$. It is a contradiction.\vskip 2mm
\par Therefore, summing up the above argument gives $L_1\equiv L_2$, which completes the proof of Theorem \ref{thm-meromorphic-positive-degree}.

\end{document}